\newcommand{\vol}{\mathrm{Vol}}
\newcommand{\p}{\partial}
\newcommand{\bignatural}{\mathop{\mathchoice
  {\vcenter{\hbox{\LARGE$\natural$}}}
  {\vcenter{\hbox{\large$\natural$}}}
  {\vcenter{\hbox{\footnotesize$\natural$}}}
  {\vcenter{\hbox{\scriptsize$\natural$}}}
}\displaylimits}
\newtheorem{prop}{Proposition}
\newtheorem{cor}{Corollary}
\newtheorem{theorem}{Theorem}
\newtheorem*{theoremA}{Theorem A}
\newtheorem*{theoremB}{Theorem B}
\newtheorem*{corA}{Corollary A}
\newtheorem*{corB}{Corollary B}
\newtheorem{claim}{Claim}
\newtheorem*{definition}{Definition}
\newtheorem{lemma}{Lemma}
\newtheorem{conjecture}{Conjecture}
\begin{document}
\title[Desingularizing psc $4$-manifolds]{Desingularizing positive scalar 
curvature $4$-manifolds}

\author{Demetre Kazaras}
\address{
Department of Mathematics\\
Duke University \\
Durham, NC, 27708\\
USA
}
\email{demetre.kazaras@duke.edu}

\subjclass[2000]{53C21,53D23, 53C80, 57R90}

\keywords{Positive scalar curvature metrics, general relativity, cobordism.}  \date{\today}
\begin{abstract}
We show that the bordism group of closed $3$-manifolds with positive scalar curvature (psc) metrics 
is trivial by explicit methods. Our constructions are derived from scalar-flat K{\"a}hler ALE
surfaces discovered by Lock-Viaclovsky. Next, we study psc $4$-manifolds
with metric singularities along points and embedded circles. Our psc null-bordisms
are essential tools in a desingularization process developed by Li-Mantoulidis. This
allows us to prove a non-existence result for singular psc metrics on enlargeable
$4$-manifolds with uniformly Euclidean geometry. As a consequence, we obtain a positive mass
theorem for asymptotically flat $4$-manifolds with non-negative scalar curvature and low regularity.
\end{abstract}  
\maketitle
\renewcommand{\baselinestretch}{1.3}\normalsize
\vspace*{-10mm}

\begin{small}
\tableofcontents
\end{small}

\newpage

\section{Introduction and Main Results}

It is a fundamental fact in differential geometry that the $n$-dimensional torus $T^n$ cannot support
a smooth Riemannian metric with positive scalar curvature (psc). In dimension $2$ this result is
a simple consequence of the Gauss-Bonnet formula and in higher 
dimensions it is known as the Geroch Conjecture. 
For $3\leq n\leq 7$ this was
first proven by Schoen-Yau using a minimal hypersurface technique \cite{SY79}
and Gromov-Lawson later
ruled out psc tori in all dimensions using index theoretic methods \cite{GL80}. 
It is natural to wonder: does the non-existence of psc metrics on 
$T^n$ still hold
when one relaxes the smoothness assumption and allow for metric singularities? Without restricting
one's attention to a special class of singularities,
the answer is of course ``no'' in general. Inspired by constructions arising in Gromov's polyhedral 
comparison theory for psc metrics \cite{Gro}, Li-Mantoulidis introduced a class of 
singular metrics well-suited
for this non-existence question in \cite{LM}, presented a general conjecture, and gave a 
complete answer in dimension $3$. In the present paper, we consider the situation in
dimension $4$ where one encounters many new challenges.

Following \cite{LM}, a metric $g$ on a smooth manifold $M$ is said to be
{\em{uniformly Euclidean}}, denoted by $g\in L_E^\infty(M)$, if it may 
be pointwise bounded above
and below by some smooth metric.
In order to state the relevant conjecture in sufficient generality, recall that the 
Yamabe invariant of a closed $n$-manifold $M$ is given by
\begin{equation}
\sigma(M)=\sup_{C}\inf_{\tilde{g}\in C}\frac{\int_MR_{\tilde{g}}dvol_{\tilde{g}}}{\mathrm{Vol}_{\tilde{g}}(M)^{\frac{n-2}{n}}}
\end{equation}
where the supremum is taken over all conformal classes $C$ 
of smooth Riemannian metrics on $M$ 
and $R_{\tilde{g}}$ denotes the scalar curvature of $\tilde{g}$. This diffeomorphism 
invariant has a long and rich history. Its relevance to our present 
discussion: $\sigma(M)>0$ if and only if $M$
admits a psc metric. The following conjecture of Schoen originally appeared in \cite{LM}.
\begin{conjecture}\cite{LM}\label{con:schoen}
Let $M^n$ be a closed manifold with $\sigma(M)\leq0$ and let $S\subset M$ be a smooth, closed, embedded submanifold of codimension at least $3$. If $g$ is an $L_E^\infty(M)\cap C^\infty(M\setminus S)$ metric with $R_g\geq0$ on $M\setminus S$, then $g$ extends smoothly to a Ricci-flat metric on $M$.
\end{conjecture}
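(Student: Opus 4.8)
\emph{Approach.} The plan is to argue by contradiction along the lines of the Li--Mantoulidis desingularization, using as input the positive-scalar-curvature and scalar-flat K\"ahler ALE fillings constructed elsewhere in this paper. Suppose $M^n$ is closed with $\sigma(M)\le 0$, that $S\subset M$ is a smooth closed embedded submanifold of codimension $\ge 3$, and that $g\in L_E^\infty(M)\cap C^\infty(M\setminus S)$ satisfies $R_g\ge 0$ on $M\setminus S$ but does \emph{not} extend smoothly to a Ricci-flat metric on $M$. Since $\operatorname{codim} S\ge 3$, the set $S$ has vanishing $W^{1,2}$-capacity, so there are cutoffs $\chi_\varepsilon\in C^\infty_c(M\setminus S)$ equal to $1$ off an $\varepsilon$-neighbourhood of $S$ with $\int_M|\nabla\chi_\varepsilon|_g^2\,d\mathrm{vol}_g\to 0$. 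As $g\in L_E^\infty$ it carries a bona fide Sobolev space $W^{1,2}(M,g)$ and conformal Laplacian $L_g=-\tfrac{4(n-1)}{n-2}\Delta_g+R_g$; inserting $\chi_\varepsilon^2\varphi$ into the weak formulation of $L_g$ and letting $\varepsilon\to 0$ shows that $R_g$ extends to a nonnegative finite Radon measure on all of $M$ with no negative part concentrated on $S$, hence that $L_g$ is coercive on $W^{1,2}(M,g)$ and the Yamabe quotient $Y(M,[g])$ is well defined and $\ge 0$.

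\emph{Dichotomy and desingularization.} If $Y(M,[g])>0$, solve the Yamabe problem for $(M,[g])$: a minimizer $\bar g=u^{4/(n-2)}g$ has $u>0$ bounded above and below by positive constants (Moser iteration and Harnack, using the Sobolev inequality of $g\in L_E^\infty$), so $\bar g\in L_E^\infty(M)$, is smooth on $M\setminus S$, and has distributional scalar curvature the positive constant $Y(M,[g])$. If $Y(M,[g])=0$, then $Y=0$ forces the total mass of the nonnegative measure $R_g$ to vanish, so $R_g\equiv 0$ on $M\setminus S$ and we set $\bar g:=g$, a weakly scalar-flat $L_E^\infty$ metric. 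In either case one desingularizes $\bar g$: excise a thin tubular neighbourhood $N$ of $S$ and glue in a smooth model metric assembled from torpedo caps on the normal spheres of the point strata (whose links are round spheres) and from the Lock--Viaclovsky scalar-flat K\"ahler ALE surfaces along the remaining strata and the conical normal directions occurring in dimension four along embedded circles, interpolated to $\bar g$ across a Gromov--Lawson neck so that the model agrees with $\bar g$ near $\partial N$, has $R\ge 0$ throughout, and --- this is where the scalar-flat ALE models, rather than merely psc ones, are needed --- is scalar-flat wherever $\bar g$ already is. The output is a smooth metric $\hat g$ with $R_{\hat g}\ge 0$ on a smooth closed manifold $\hat M$ obtained from $M$ by these codimension-$\ge 3$ cone replacements, with $R_{\hat g}\not\equiv 0$ when $Y(M,[g])>0$ and $R_{\hat g}\equiv 0$ when $Y(M,[g])=0$.

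\emph{Rigidity.} When $Y(M,[g])>0$, the smooth metric $\hat g$ has $R_{\hat g}\ge 0$ and $\not\equiv 0$, so a further Yamabe deformation yields a genuine smooth metric of positive scalar curvature on $\hat M$; this contradicts $\sigma(M)\le 0$ \emph{once} $\sigma(\hat M)\le 0$ is known. When $Y(M,[g])=0$, $\hat g$ is a smooth scalar-flat metric on $\hat M$, so if $\sigma(\hat M)\le 0$ then by Bourguignon's theorem (a closed manifold carrying a scalar-flat but no positive-scalar-curvature metric is Ricci-flat) $\hat g$ is Ricci-flat; a removable-singularity and rigidity analysis of $g$ itself near $S$ --- now known to be its own Yamabe minimizer, weakly scalar-flat, and smooth off a codimension-$\ge 3$ set on a manifold with $\sigma=0$ --- then upgrades $g$ to a smooth Ricci-flat metric on $M$, again contrary to our standing assumption. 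Either way the conjectured conclusion follows \emph{provided} that $\sigma(\hat M)\le 0$ and that this last removable-singularity step can be carried out.

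\emph{Main obstacle.} That proviso is the crux. The desingularization is not topologically innocuous: the ALE resolutions carry nontrivial second homology, so in general $\hat M\ne M$, and the non-existence of positive-scalar-curvature metrics (the content of $\sigma\le 0$) must be shown to persist under the surgery $M\rightsquigarrow\hat M$. This is automatic when $M$ is \emph{enlargeable}: enlargeability is inherited by $\hat M$ under the macroscopically small, codimension-$\ge 3$ replacements involved, so $\sigma(\hat M)\le 0$, and the final regularity step can likewise be run using that an enlargeable manifold with $R\ge 0$ is Ricci-flat. This is exactly how Conjecture~\ref{con:schoen} is established in dimension four for enlargeable $M^4$ in the present paper, the dimension-three case being the theorem of Li--Mantoulidis. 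For a general closed $M^n$ with $\sigma(M)\le 0$ one would instead need a positive-scalar-curvature obstruction --- an index-theoretic invariant in the spirit of Gromov--Lawson--Rosenberg --- that is monotone under desingularization; supplying such an obstruction for every such $M$ is the outstanding difficulty, and it is why the conjecture in full generality remains open.
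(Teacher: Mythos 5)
There is a fundamental mismatch here: the statement you were asked about is a \emph{conjecture} (Schoen's conjecture, quoted from Li--Mantoulidis), and the paper does not prove it --- nor do you. Your own text concedes at the end that the general case ``remains open,'' so what you have written is a survey of a strategy, not a proof, and it cannot be certified as one. Moreover, you overstate what is established even in the special cases. The paper's Corollary A treats enlargeable $4$-manifolds with $S$ a finite $1$-complex and concludes only that $g$ is Ricci-flat \emph{on $M\setminus S$}; the smooth Ricci-flat extension across $S$ demanded by the conjecture is explicitly deferred (``We expect that Ricci-flat metrics $g$ on $M\setminus S$ smoothly extend over the singular set. This is the subject of current consideration.''). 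So the claim that the conjecture ``is established in dimension four for enlargeable $M^4$ in the present paper'' is not accurate, and your final ``removable-singularity and rigidity analysis \ldots upgrades $g$ to a smooth Ricci-flat metric on $M$'' is precisely the missing step, asserted rather than argued.

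Beyond the overall status, several steps of your intermediate construction would not go through and differ substantially from what the paper actually does. You propose to excise a neighbourhood of $S$ and glue in scalar-flat Lock--Viaclovsky ALE pieces ``across a Gromov--Lawson neck so that \ldots $R\ge 0$ throughout, and \ldots scalar-flat wherever $\bar g$ already is.'' No such gluing is available: Gromov--Lawson-type neck constructions consume strictly positive scalar curvature, and there is no off-the-shelf way to attach an ALE cap to a merely scalar-flat (let alone $L^\infty_E$, weakly defined) metric while keeping $R\ge 0$ and preserving scalar-flatness. The paper's route is quite different and needs strict positivity: a conformal blow-up by a Green's function of a modified conformal Laplacian, area-minimizing ``shield'' hypersurfaces near $S$, Schoen--Yau descent of positivity to the shields, Akutagawa--Botvinnik to obtain product psc collars, and the triviality of $\Omega^{SO,+}_3(S)$ (Theorem B) to cap off --- none of which has an analogue in your $Y(M,[g])=0$ branch, where the metric is scalar-flat and the whole mechanism breaks down. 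In addition, the Lock--Viaclovsky surfaces are $4$-dimensional, so your desingularization cannot be run for general $n$ as the conjecture requires; the solvability and uniform two-sided bounds for the Yamabe minimizer of an $L^\infty_E$ metric with only measure-valued scalar curvature are themselves unproved assertions; and the reduction the paper does use in the $R_g\ge 0$ case relies on a non-conformal perturbation available only when $\mathrm{Ric}_g\not\equiv 0$. In short: the conjecture has no proof in the paper, your proposal does not supply one, and the concrete gaps are (i) the smooth extension of $g$ across $S$, (ii) a psc obstruction stable under the topology-changing desingularization for general $\sigma(M)\le 0$, and (iii) an $R\ge 0$-preserving capping construction valid in the scalar-flat case and in dimensions other than four.
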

\noindent Notice that a metric $g\in L_E^\infty(M)\cap C^\infty(M\setminus S)$
may have unbounded curvature. Li-Mantoulidis have confirmed Conjecture \ref{con:schoen} 
in dimension $3$ \cite[Theorem 1.4]{LM}. 

In Corollary A, we make additional progress on Conjecture \ref{con:schoen} by applying
our main desingularization result, described in Theorem A. For a $4$-manifold $M^4$, we call a subset $S\subset M$ a {\emph{tolerable singular set}} if it is the image of a finite collection of smooth and disjoint embeddings of circles and points. We emphasize that our interest is in {\emph{metric singularities}} and the underlying manifolds we consider are assumed to be smooth.
\begin{theoremA}
Let $M^4$ be a closed oriented $4$-manifold and let $S\subset M$ be a tolerable singular set. Suppose $g$ is an $L^\infty_E(M)\cap C^{\infty}(M\setminus S)$ metric with $R_g>0$ on $M\setminus S$. Then there exists a smooth closed oriented psc manifold $(\overline{M},\overline{g})$ with a degree-$1$ map $F:\overline{M}\to M$. Moreover, there is a neighborhood $U\subset M$, containing
and retracting onto $S$, so that $F|_{\overline{M}\setminus F^{-1}(U)}$ is a conformal diffeomorphism.
\end{theoremA}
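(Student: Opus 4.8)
The plan is to reduce the non-smoothness of $g$ along $S$ to two model situations—isolated point singularities and singularities along embedded circles and segments—and to resolve each model by gluing in a suitable smooth psc filling. First I would use compactness of $S$ and the local structure of a smoothly embedded finite $1$-complex to choose a tubular-type neighborhood $U$ of $S$ which deformation-retracts onto $S$; away from $U$ we simply keep $(M,g)$, which is already smooth and has $R_g>0$. The content is therefore local near $S$. Near an isolated singular point $p$, the metric $g$ on a punctured ball $B\setminus\{p\}$ is $L^\infty_E$ with $R_g>0$; the strategy, following the Li--Mantoulidis desingularization scheme, is to excise a small geodesic-type sphere $S^3_\epsilon$ around $p$ (whose induced geometry we may take to have $R>0$ after a small conformal adjustment, using that $R_g>0$ dominates near a removable-looking boundary), and to cap it off with a smooth psc manifold whose boundary is $S^3_\epsilon$. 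Because the psc bordism group of closed $3$-manifolds is trivial by the first main result quoted in the abstract, and because such caps can be built explicitly from the scalar-flat Kähler ALE surfaces of Lock--Viaclovsky (again as in the abstract), the required psc null-bordism of $S^3_\epsilon$ exists; gluing it in along $S^3_\epsilon$ and smoothing yields a metric of positive scalar curvature on the modified piece. The resulting manifold $\overline M$ comes with a map $F$ that is the identity (hence a conformal diffeomorphism) outside $U$ and collapses each cap, so $F$ has degree $1$.

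For the $1$-dimensional strata of $S$—the embedded circles and the embedded segments—the model is a singular metric on $(B^3\setminus\{0\})\times S^1$ or $(B^3\setminus\{0\})\times[0,1]$, and near a cross-section the link of the singularity is $S^2$ rather than $S^3$. Here I would again remove a small solid-torus (resp. slab) neighborhood of the circle (resp. segment), whose boundary is diffeomorphic to $S^2\times S^1$ (resp. $S^2\times[0,1]$, to be capped consistently with the endpoints), and fill with a smooth psc manifold having that boundary and carrying a product-like psc metric near the boundary, e.g.\ built from a round $S^2$ times a curve and then capped where topology allows; this is precisely where the explicit psc null-bordisms constructed earlier in the paper are used. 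One must check the boundary metrics can be normalized—via a small conformal change supported near the excised boundary, exploiting $R_g>0$ to absorb error terms—so that they match a standard bending/cylindrical collar of the cap, after which the Gromov--Lawson--type bending and smoothing preserves $R>0$ across the seam. Tracking orientations throughout ensures $\overline M$ is oriented and $F$ is degree $1$, and the retraction of $U$ onto $S$ is arranged by taking $U$ to be the union of the excised model neighborhoods.

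The main obstacle I expect is the analytic matching at the seam in the low-regularity setting: $g$ is only $L^\infty_E$, so there is no a priori curvature control as one approaches $S$, and the induced metrics on the small excised hypersurfaces $S^3_\epsilon$, $S^2\times S^1$, etc., are not controlled in any $C^k$ sense uniformly in $\epsilon$. Overcoming this requires a careful choice of the hypersurface—likely a level set of a mollified distance function, or a carefully chosen graph—together with a conformal deformation on a thin collar that simultaneously (i) renders the boundary metric close to a standard model and (ii) preserves positivity of scalar curvature; the positivity of $R_g$ on $M\setminus S$, rather than mere non-negativity, is what gives the room to perform this deformation. Once the boundary is standardized, the gluing is the now-classical psc surgery/bending argument, and the global topological bookkeeping (degree, orientation, the neighborhood $U$) is routine. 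A secondary technical point is ensuring that only finitely many caps are needed and that they can be chosen with uniformly bounded geometry, which follows from compactness of $S$ and the finiteness of the $1$-complex.
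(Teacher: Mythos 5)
There is a genuine gap at the crux of the argument: how one obtains a hypersurface around $S$ along which the excision-and-capping can be performed. You propose to excise a small distance sphere $S^3_\epsilon$ (or $S^2\times S^1$ around a circle) and to fix up its induced geometry by ``a small conformal adjustment'' on a thin collar, ``exploiting $R_g>0$ to absorb error terms.'' For an $L^\infty_E$ metric this does not work, and you correctly flag the difficulty yourself without supplying a mechanism that resolves it: there is no uniform control, as $\epsilon\to 0$, on the induced metric or the mean curvature of a level set of a (mollified) distance function, so the excised boundary need not be Yamabe positive nor mean-convex, and no conformal factor supported near the seam will simultaneously standardize the boundary and preserve $R>0$. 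The paper's actual mechanism is different in kind: one conformally blows up $g$ by $(1+\varepsilon G)^2$, where $G$ is a Green's-type function of a modified conformal Laplacian with pole along $S$, making $M\setminus S$ complete with a neck near $S$; one then solves an area-minimization problem in the neck to produce a \emph{stable minimal} shield $\Sigma_\varepsilon$. Minimality gives $H\equiv 0$ on the boundary of the outer piece (hence Yamabe positivity of $(M_0,[g_\varepsilon])$, so Akutagawa--Botvinnik supplies a product psc collar), and stability plus ambient $R>0$ gives Yamabe positivity of $\Sigma$ itself via Schoen--Yau. This is the step your proposal is missing, and it cannot be replaced by a conformal correction near a distance sphere.

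A second, related error: you assume the link of a point singularity can be taken to be $S^3$ and of a circle to be $S^2\times S^1$, so that only caps of those two boundary types are needed. In the paper's construction the shield $\Sigma$ is whatever the area minimizer happens to be; it is only known to be a closed orientable psc $3$-manifold (e.g.\ possibly a connected sum of lens spaces). This is precisely why the full triviality of $\Omega^{SO,+}_3(S)$ --- caps for \emph{arbitrary} psc $3$-manifolds, together with an extension of the retraction map to $S$ so that the degree-$1$ map $F$ exists even when components of $S$ are essential in $\pi_1(M)$ --- is required, rather than explicit fillings of $S^3$ and $S^2\times S^1$. Your global bookkeeping (orientation, degree, the neighborhood $U$) is fine, but the analytic and topological heart of the proof is absent.
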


\begin{figure}[!htb]
\label{f:theoremA}
\includegraphics[scale=.4]{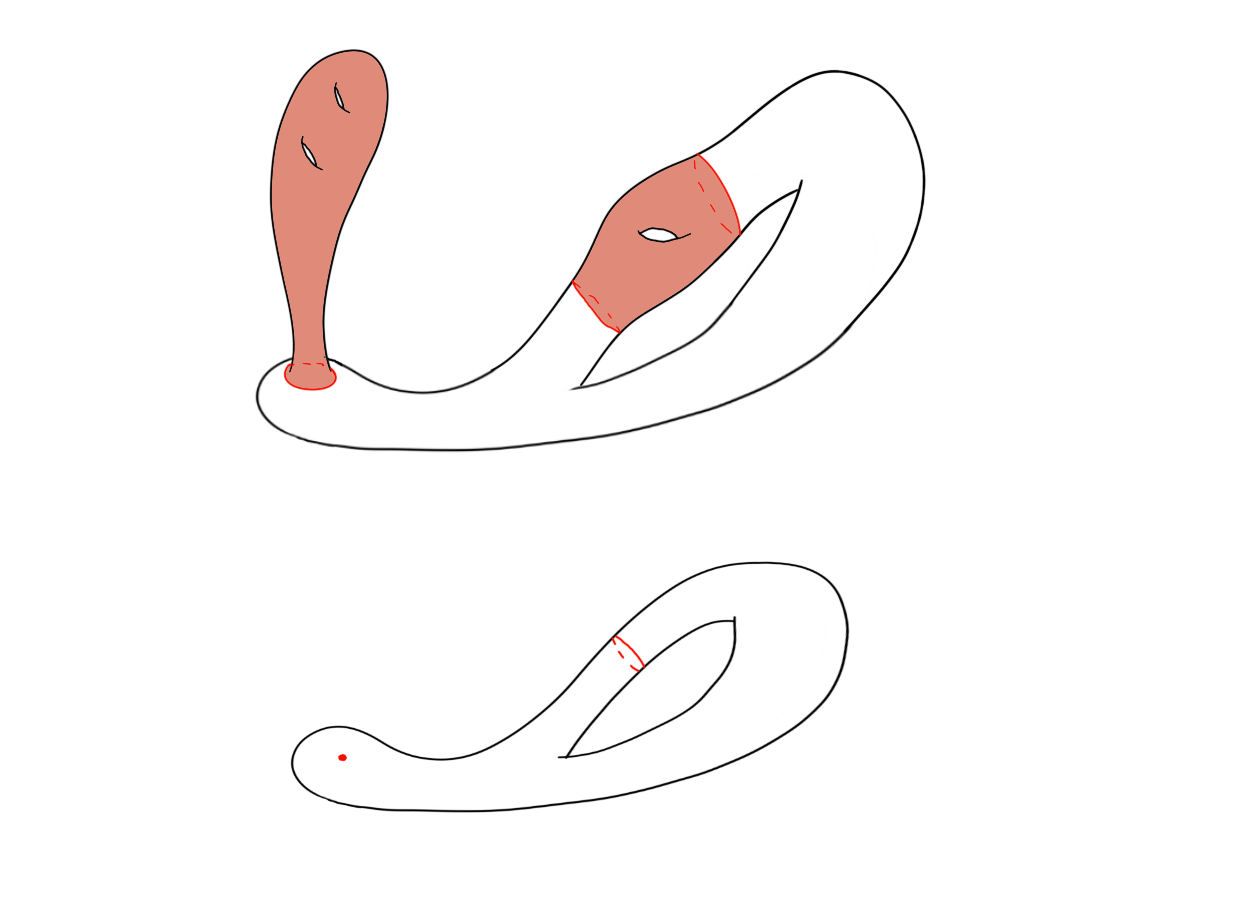}
\begin{picture}(0,0)
\put(-90,180){{\Large $(\overline{M},\overline{g})$}}
\put(-90,70){{\Large $(M,g)$}}
{\thicklines
\put(-70,170){\vector(0,-1){82}}
}
\put(-64,120){{\Large $F$}}
{\color{red}
\put(-260,70){{\large $S$}}
\put(-263,67){\vector(-2,-3){14}}
\put(-247,75){\vector(1,0){39}}
\put(-250,220){{\large $F^{-1}(S)$}}
\put(-253,217){\vector(-3,-2){35}}
\put(-210,217){\vector(1,-1){15}}
}
\end{picture}
\caption{The desingularization $F:\overline{M}\to M$ in Theorem A.}
\end{figure}
In Theorem A, we regard $(\overline{M},\overline{g})$ as a desingularization of $(M,g)$, pictured 
in Figure \ref{f:theoremA}. In general, 
$\overline{M}$ is not diffeomorphic to $M$, possibly having much larger second Betti number. 
This is in contrast to \cite{LM},
where Li-Mantoulidis show that point singularities in $3$-manifolds may be 
desingularized without changing the underlying manifold's topology.
While we cannot show that this change in topology is necessary, one 
can regard this feature of our construction
as a reflection of the added topological complexities one encounters in dimension $4$.

We use Theorem A to establish a non-existence 
result for singular psc metrics of on a class of $4$-manifolds with 
non-positive $\sigma$-invariant called
{\em{enlargeable manifolds}}. Recall that a closed oriented $n$-manifold $M^n$
is said to be enlargeable if, for every Riemannian metric on $M$ and $\varepsilon>0$, 
there is a cover of $M$ which admits an $\varepsilon$-contracting map of non-zero degree 
to the unit sphere.
The prototypical example is the torus where one may consider sufficiently large 
coverings and produce
highly compressive maps to the sphere.
This notion was originally introduced by Gromov-Lawson in \cite{GL80} in order 
to relate the fundamental group
to Dirac operator methods for spin manifolds, showing that enlargeable spin 
manifolds do not admit psc metrics. Cecchini-Schick have extended this result to non-spin manifolds in \cite{CS}. We review the relevant definitions and facts of enlargeable manifolds in Section \ref{s:en}.

\begin{corA}
Let $M^4$ be a closed oriented enlargeable $4$-manifold and let $S\subset M$ be a 
tolerable singular set.
Then $M$ cannot admit a metric $g$ in $L_E^\infty(M)\cap C^\infty(M\setminus S)$ with $R_g>0$ on $M\setminus S$.
Moreover, if $g$ in $L_E^\infty(M)\cap C^\infty(M\setminus S)$ satisfies $R_g\geq 0$ on $M\setminus S$,
then $g$ is Ricci-flat on $M\setminus S$.
\end{corA}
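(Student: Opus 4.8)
The plan is to deduce both assertions from Theorem~A, using two facts about enlargeability reviewed in Section~\ref{s:en}: it is inherited by the domain of any nonzero-degree map onto an enlargeable manifold, and a closed enlargeable manifold admits no smooth psc metric (Gromov--Lawson \cite{GL80} in the spin case, Cecchini--Schick \cite{CS} in general). For the first assertion, suppose toward a contradiction that $M$ carries $g\in L^\infty_E(M)\cap C^\infty(M\setminus S)$ with $R_g>0$ on $M\setminus S$. Theorem~A yields a smooth closed oriented psc manifold $(\overline M,\overline g)$ and a degree-one map $F\colon\overline M\to M$. Since $\deg F=1\neq 0$ and $M$ is enlargeable, $\overline M$ is enlargeable: fixing a metric $h$ on $\overline M$, an auxiliary smooth metric $g_M$ on $M$, and $\varepsilon>0$, set $C=\sup_{\overline M}\|dF\|_{h,g_M}$ and apply enlargeability of $M$ to $(g_M,\varepsilon/C)$ to obtain a cover $q\colon\widetilde M\to M$ with an $(\varepsilon/C)$-contracting map $\phi\colon\widetilde M\to S^4$ of nonzero degree; the pullback cover $\widehat M:=\overline M\times_M\widetilde M\to\overline M$ then carries $\phi\circ\widehat F$, where $\widehat F\colon\widehat M\to\widetilde M$ covers $F$, which has nonzero degree and, with respect to the pulled-back metric, is $\varepsilon$-contracting because $\|d\widehat F\|\le C$. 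As $\overline M$ is enlargeable it cannot carry the smooth psc metric $\overline g$, a contradiction.

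For the second assertion, let $g\in L^\infty_E(M)\cap C^\infty(M\setminus S)$ satisfy $R_g\ge 0$ on $M\setminus S$; I will show that if $g$ is not Ricci-flat on $M\setminus S$ then $M$ carries a metric in $L^\infty_E(M)\cap C^\infty(M\setminus S)$ with $R>0$ on $M\setminus S$, contradicting the first assertion. Suppose first that $R_g\not\equiv 0$ on $M\setminus S$, and consider the conformal Laplacian $L_g=-6\Delta_g+R_g$ with bottom of the spectrum $\lambda_1=\inf_u\big(\int_M 6|\nabla u|_g^2+R_g u^2\,dV_g\big)$, the infimum over $u\in C^\infty_c(M\setminus S)$ with $\int_M u^2\,dV_g=1$. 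Because $g$ is uniformly Euclidean, $W^{1,2}(M,g)$ is comparable to $W^{1,2}(M,g_0)$ for a smooth comparison metric $g_0$, the embedding $W^{1,2}(M,g)\hookrightarrow L^2(M,g)$ is compact, and the codimension-$\ge 3$ set $S$ has zero $W^{1,2}$-capacity, so $\lambda_1$ is unchanged when the infimum is extended over all of $W^{1,2}(M,g)$. Since $R_g\ge 0$ we have $\lambda_1\ge 0$; if $\lambda_1=0$ a minimizing sequence forces $\int|\nabla u_i|_g^2\to 0$ and $\int R_g u_i^2\to 0$, and an $L^2$-limit $u$ of unit norm then has $\nabla u\equiv 0$ and, by Fatou, $\int_M R_g u^2\,dV_g=0$; as $u$ is a nonzero constant this gives $\int_M R_g\,dV_g=0$, impossible since $R_g\ge 0$ is continuous and positive on a nonempty open subset of $M\setminus S$. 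Hence $\lambda_1>0$, and elliptic theory for $L_g$ (uniformly elliptic principal part, nonnegative potential) produces a first eigenfunction $u>0$ that is bounded above and below on $M$; then $g':=u^2 g\in L^\infty_E(M)\cap C^\infty(M\setminus S)$ and $R_{g'}=u^{-3}L_g u=\lambda_1 u^{-2}>0$ on $M\setminus S$, as required.

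It remains to treat the case $R_g\equiv 0$ on $M\setminus S$ with $g$ not Ricci-flat. Then $\mathrm{Ric}_g(p)\neq 0$ for some $p\in M\setminus S$, and by the classical local deformation of Kazdan and Warner one can modify $g$ inside a small ball $B$ with $\overline B\subset M\setminus S$ to a smooth metric that agrees with $g$ off $B$ — hence remains in $L^\infty_E(M)\cap C^\infty(M\setminus S)$ — has scalar curvature $\ge 0$ on $M\setminus S$, and is strictly positive somewhere in $B$. This reduces to the previous case and completes the proof.

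The first assertion is a formal consequence of Theorem~A and the permanence properties of enlargeability. In the rigidity statement the substantive analytic content is the conformal deformation when $R_g\not\equiv 0$: establishing the spectral gap $\lambda_1>0$ in the presence of the singularities along $S$, which is handled above via the $L^\infty_E$ hypothesis and the capacity-zero property of $S$, and — the genuinely delicate step — showing the first eigenfunction $u$ is bounded \emph{below} near $S$, so that $u^2 g$ is again uniformly Euclidean. The latter requires a Harnack-type estimate for the Schrödinger operator $L_g$ with its possibly unbounded but nonnegative potential $R_g$, in the spirit of the conformal-Laplacian analysis of Li--Mantoulidis \cite{LM}, and is where the real work lies.
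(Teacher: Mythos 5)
Your proposal is correct and follows essentially the same route as the paper: apply Theorem A, use the permanence of enlargeability under degree-one maps (Proposition \ref{p:en}) together with the Cecchini--Schick theorem for the first assertion, and reduce the rigidity statement to it via a Kazdan--Warner-type perturbation followed by a conformal deformation. The paper delegates exactly the two steps you flag as delicate --- the non-conformal perturbation and, in particular, the lower bound on the conformal factor near $S$ needed to stay in $L^\infty_E$ --- to the proof of \cite[Theorem 1.7]{LM} and \cite[Corollary 4.2]{LM}, so your identification of where the real analytic work lies is accurate.
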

\noindent For context, we remark that while the class of 
enlargeable $4$-manifolds is large, there is a comparable number of 
non-enlargeable $4$-manifolds which do not admit psc metrics, due to the non-triviality 
of their Seiberg-Witten invariant. In Section \ref{s:disc} we discuss this and 
the status of Conjecture \ref{con:schoen} in dimension $4$.
We expect that Ricci-flat metrics $g$ on $M\setminus S$ smoothly 
extend over the singular set. 

As an application of Corollary A, we obtain a Riemannian positive mass theorem for singular metrics of non-negative scalar 
curvature on asymptotically flat $4$-manifolds. See Section \ref{s:ALEpre} for the relevant definitions.
\begin{corB}
Let $(M^4,g)$ be an asymptotically flat $4$-manifold with 
$g\in L^{\infty}(M)\cap C^{\infty}(M\setminus S)$
where $S$ is a tolerable singular set. 
If $R_g\geq0$, then $m(M,g)\geq0$.
\end{corB}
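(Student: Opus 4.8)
The plan is to argue by contradiction and reduce to Corollary A. Suppose $m(M,g)<0$. First I would invoke the classical reduction of the Riemannian positive mass theorem to the non-existence of positive scalar curvature metrics on the torus (due to Lohkamp; see also Schoen--Yau, and Lee's \emph{Geometric Relativity}): since $m(M,g)<0$, there is a metric $g'$ on $M$ which coincides with the flat metric outside a compact set $K$, satisfies $R_{g'}\ge 0$ on $M\setminus S$, and has $R_{g'}>0$ on some nonempty open subset of $M\setminus S$. The key point for the singular setting is that this deformation is supported entirely in the asymptotically flat end, where $g$ is smooth and $S$ does not appear; hence we may take $g'=g$ on a neighborhood of $S$ and $S\subset K$, so that $g'\in L^\infty_E(M)\cap C^\infty(M\setminus S)$. (If the reduction is phrased assuming harmonic asymptotics, one first applies the density theorem in a form that modifies $g$ only near infinity, again leaving $g$ unchanged near $S$.)

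Second, I would compactify. Removing the exactly-flat outer portion of the end produces a compact oriented manifold-with-boundary $\widehat M\supset K$ whose boundary is a round $3$-sphere, near which $g'$ is Euclidean. Deleting an isometric Euclidean ball from a flat $4$-torus and gluing in $\widehat M$ along this sphere yields a closed oriented $4$-manifold $X=(T^4\setminus B^4)\cup_{S^3}\widehat M$, diffeomorphic to a connected sum $T^4\#\overline M$, carrying the metric $\overline g$ equal to the flat metric on $T^4\setminus B^4$ and to $g'$ on $\widehat M$. Since both pieces are genuinely Euclidean near the gluing $3$-sphere, $\overline g\in L^\infty_E(X)\cap C^\infty(X\setminus S)$ with $R_{\overline g}\ge 0$ on $X\setminus S$ and $R_{\overline g}>0$ on a nonempty open set, and $S\subset\widehat M\subset X$ is a smoothly embedded finite $1$-complex (a finite union of points and disjoint circles). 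As $T^4$ is enlargeable and a connected sum of an enlargeable manifold with any closed oriented manifold of the same dimension is again enlargeable (Section \ref{s:en}), $X$ is enlargeable.

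Third, Corollary A applied to $(X,\overline g)$ with singular set $S$ forces $\overline g$ to be Ricci-flat on $X\setminus S$, hence $R_{\overline g}\equiv 0$ there, contradicting $R_{\overline g}>0$ on an open set. Therefore $m(M,g)\ge 0$.

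The only step carrying real content beyond Corollary A is the first one: one must be sure the classical flattening-of-the-end construction goes through in our setting without perturbing $S$ or degrading the uniformly Euclidean regularity of $g$. This is routine rather than hard, precisely because the construction is local near the asymptotically flat end, where $g$ is as smooth as one likes and $S$ is absent, and it is exactly why the hypothesis confines $S$ to the interior. All of the genuine geometry sits inside Corollary A, hence Theorem A.
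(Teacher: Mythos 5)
Your architecture is exactly the paper's: assume $m(M,g)<0$, run Lohkamp's flattening of the end, compactify to $T^4\# M'$, invoke enlargeability of the connected sum, and contradict Corollary A. The compactification and enlargeability steps are fine (the paper glues via opposite faces of a cube rather than a round sphere, which is immaterial).

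The gap is in the step you yourself flag as the only one with content, and your reason for dismissing it is wrong. The Lohkamp reduction is \emph{not} supported in the asymptotically flat end. Its first two stages are global conformal changes: one solves $\mathcal{L}_g u=0$ with $u\to 1$ at infinity to make $g_1=u^2g$ scalar-flat with still-negative mass, and then (after interpolating with $\delta$ in an annulus, which creates scalar curvature of both signs there) one solves a second global equation, as in Schoen's Proposition 4.1, to restore scalar-flatness and make the end conformally flat. Only the final modification of the conformal factor (Lohkamp's Lemma 6.2, replacing $\phi$ by a superharmonic $\phi'$ that is constant far out) is localized to the end. Consequently you cannot arrange $g'=g$ near $S$; the conformal factors are genuinely nonconstant across $S$. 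The real content in the singular setting is therefore: (i) existence of a positive solution $u$ of $\mathcal{L}_g u=0$ when $g$ is only $L^\infty_E$ near $S$ --- the paper gets a $W^{1,2}_{\mathrm{loc}}$ solution from the analysis of \cite[Theorem 1.9]{LM}; and (ii) the regularity statement that $u\in C^{0,\alpha}(M)\cap C^\infty(M\setminus S)$ with $0<u<1$ (maximum principle plus elliptic estimates), so that $u^2g$ stays in $L^\infty_E(M)\cap C^\infty(M\setminus S)$ and Corollary A still applies at the end. Without supplying (i) and (ii) --- or an actual localized version of the density theorem, which is not what the standard references provide --- the reduction to Corollary A is not justified.
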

\noindent The proof of Corollary B is based on a well-known argument due to Lohkamp 
\cite{Loh} which reduces 
the Riemannian positive mass theorem to the non-existence of psc 
metrics on manifolds of the form $T^n\# N$
where $N$ is some closed oriented manifold. We adapt this argument to our lower regularity 
setting. For other positive mass theorems in low-regularity settings, see the results 
of Lee-LeFloch \cite{LL}.

Let us describe our second main result, Theorem B.
When adapting the Li-Mantoulidis methods to the $4$-dimensional setting, 
one is faced with the oriented bordism group of 
psc $3$-manifolds. A pair of closed oriented psc $n$-manifolds
$(M_0,g_0)$, $(M_1,g_1)$ are said to be {\em psc-bordant} if there is a 
compact oriented $(n+1)$-manifold $W$
equipped with a psc metric $\bar{g}$ such that
\begin{enumerate}
\item $W$ is an oriented cobordism from $M_0$ to $M_1$, i.e. $\partial W=M_0\sqcup-M_1$;
\item The metric $\bar{g}$ takes the product form $g_i+dt^2$ on a neighborhood of $M_i$ for $i=0,1$.
\end{enumerate}
Considering the class of all closed oriented psc $n$-manifolds modulo psc-bordism with the operation 
of disjoint union, one obtains an abelian group $\Omega^{SO,+}_n$ for $n\geq2$. One can also
consider psc manifolds equipped with the extra data of a map to a fixed space $X$ 
and additionally require that
cobordisms $(W,\bar{g})$ admit extensions of this map. This yields an abelian group 
$\Omega^{SO,+}_n(X)$. 

Very little is known about the groups $\Omega^{SO,+}_n$.
In fact, the only previously computed psc-bordism group is 
$\Omega^{SO,+}_2$, which is trivial. One could argue that, 
though these groups were not defined at the time, the triviality of 
$\Omega^{SO,+}_2$ dates back to Weyl in 
1916 where he showed the path-connectedness of the space
of psc metrics on $S^2$. 
We mention a related result: in \cite{MS}, 
Mantoulidis-Schoen show the connectedness
of the space of metrics on $S^2$ which are conformally psc.

Theorem B, which may be of independent interest, 
is a calculation of the groups 
$\Omega^{SO,+}_3(S)$ where $S$ is a finite $1$-complex, i.e. $S$ is a union of points and wedge-products of circles. 
It is an essential tool in our proof of Theorem A. 
\begin{theoremB}
Let $S$ be a finite $1$-complex. Then the $3$-dimensional oriented psc-bordism group 
$\Omega^{SO,+}_3(S)$ is trivial.
\end{theoremB}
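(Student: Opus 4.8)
The plan is to reduce to a short list of ``generating'' psc $3$-manifolds carrying maps to $S$ and to write down an explicit psc null-bordism of each. Since the group operation is disjoint union and a connected manifold maps into one component of $S$, I may assume $S$ is connected; being a finite $1$-complex it is then a point or homotopy equivalent to a wedge of circles, hence aspherical with free fundamental group. By Marques' theorem the space of psc metrics on a closed oriented $3$-manifold is path-connected, and a sufficiently slow path of psc metrics (made constant near its endpoints) produces a psc metric of product form on the cylinder; thus the class $[M^3,g,f]\in\Omega^{SO,+}_3(S)$ is independent of $g$, and I take $g$ to be a Gromov--Lawson connected-sum metric. By the Kneser--Milnor prime decomposition and Perelman's geometrization theorem, together with the fact that psc $3$-manifolds have no aspherical prime summands, $M$ is a connected sum of spherical space forms $S^3/\Gamma_i$ and copies of $S^2\times S^1$. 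Connected sum is a codimension-$3$ surgery, so the trace of the $0$-surgeries splitting this connected sum is a psc bordism (Gromov--Lawson), and $f$ extends over the contractible $1$-handles; hence
\[
[M,g,f]=\sum_i [S^3/\Gamma_i,f_i]+\sum_j[S^2\times S^1,f'_j]\in\Omega^{SO,+}_3(S),
\]
so it suffices to kill the two kinds of classes on the right.

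Next I analyze the maps. As $\pi_1(S^3/\Gamma_i)=\Gamma_i$ is finite and $\pi_1(S)$ is torsion-free, each $f_i$ is null-homotopic and therefore extends over any null-bordism. As $\pi_1(S^2\times S^1)=\Z$, each $f'_j$ is homotopic to a composite $S^2\times S^1\xrightarrow{\mathrm{pr}}S^1\to S$. For the $S^2\times S^1$ generator I take $W=D^3\times S^1$, where $D^3$ carries the $3$-dimensional torpedo metric: rotationally symmetric, equal to the round cylinder $g_{S^2}+dt^2$ near $\partial D^3$, capped off smoothly inside, with $R>0$ throughout. Then the product metric on $W$ is psc, its boundary collar is the product over $(S^2\times S^1,\,g_{S^2}+d\theta^2)$, and the composite $W\to S^1\to S$ extends $f'_j$. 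This handles the $S^2\times S^1$ generators.

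The heart of the matter is $(S^3/\Gamma,\,g_{\mathrm{round}})$, where the Lock--Viaclovsky scalar-flat K\"ahler ALE surfaces are the key input: for the relevant $\Gamma$ there is a complete scalar-flat surface $(X,g_{LV})$ asymptotic to the flat cone $dr^2+r^2g_{S^3/\Gamma}$ over $S^3/\Gamma$. I would conformally deform it: fix a smooth $\rho>0$ on $X$ with $\rho\sim c_0 r^{-3}$ near infinity ($c_0>0$), solve $\Delta_{g_{LV}}u=-\rho$ with $u\to0$ at infinity by standard weighted elliptic theory on ALE manifolds (the rate $r^{-1}$ is non-exceptional), so that $u>0$ by the maximum principle and $u\sim c_0 r^{-1}$. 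For $\tilde g:=u^2 g_{LV}$, the conformal change of scalar curvature on a scalar-flat $4$-manifold gives
\[
R_{\tilde g}=-6\,u^{-3}\Delta_{g_{LV}}u=6\,u^{-3}\rho>0
\]
everywhere, while near infinity $\tilde g$ is asymptotic to the round cylinder $c_0^2(ds^2+g_{S^3/\Gamma})$. Truncating at large $s$ and interpolating to this exact product metric over a compact range preserves $R>0$, since the model cylinder has $R=6/c_0^2>0$ and the remainder decays; the outcome is a compact psc manifold $(Z,\bar g)$ with $\partial Z=S^3/\Gamma$ and $\bar g$ of product form near the boundary. So $(S^3/\Gamma,\,c_0^2 g_{\mathrm{round}})$ --- hence, invoking Marques once more, $(S^3/\Gamma,g)$ for any psc $g$ --- is psc null-bordant, and the null-homotopic $f_i$ extends over $Z$. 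Combining the three cases gives $\Omega^{SO,+}_3(S)=0$.

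The step I expect to be the main obstacle is this last construction, i.e. converting a scalar-flat ALE filling into a genuine psc null-bordism with a product collar over a round boundary. Scalar-flatness is exactly what makes it work, since it turns positivity of $R_{\tilde g}$ into strict superharmonicity of the conformal factor; but one must (i) pin down the borderline asymptotics $u\sim c_0 r^{-1}$, as the wrong decay would make the new end cusp-like or incomplete instead of cylindrical, and (ii) keep $C^2$ control in the final interpolation to an exact product. A secondary point, which I would settle by direct inspection of the classification of spherical space forms, is to verify that every free finite $\Gamma\subset\mathrm{SO}(4)$ occurring as a prime summand admits a Lock--Viaclovsky-type scalar-flat ALE filling; the lens-space case is classical, the binary polyhedral case is standard, and the remaining subgroups are covered within the same framework.
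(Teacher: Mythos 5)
Your proposal is correct and rests on the same three pillars as the paper's proof: Marques' connectedness theorem to make the bordism class metric-independent, the Perelman/Schoen--Yau/Gromov--Lawson classification of psc $3$-manifolds, and the Lock--Viaclovsky scalar-flat ALE surfaces as fillings of the spherical space forms (including the reduction of arbitrary free finite $\Gamma\subset SO(4)$ to the $U(2)$ case, which the paper settles by citing Scott's conjugacy theorem). Where you genuinely diverge is in the two assembly steps. First, to turn the ALE surface into a psc null-bordism with product collar, the paper truncates it at a large, mean-convex, nearly round coordinate sphere, observes that scalar-flatness plus mean convexity makes the compact piece Yamabe positive with minimal boundary condition, and then invokes the Akutagawa--Botvinnik equivalence to produce the product-collar psc metric; you instead solve $\Delta_{g_{LV}}u=-\rho$ with $\rho\sim c_0r^{-3}$ (the rate $-1$ is indeed non-exceptional for the ALE Laplacian in dimension $4$, and $\Delta r^{-1}=-r^{-3}$ on $\mathbb{R}^4$ gives the leading term $c_0r^{-1}$), making the conformal metric simultaneously psc and asymptotically cylindrical, and then cap by interpolation. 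Your route is more hands-on and self-contained --- it amounts to proving the needed direction of Akutagawa--Botvinnik explicitly in this case --- at the cost of the borderline weighted-elliptic and $C^2$-interpolation estimates you correctly flag; the paper's route is shorter but leans on \cite{AB} as a black box. Second, for connected sums the paper forms the boundary connected sum of the individual null-bordisms and applies Carr's version of Gromov--Lawson surgery, whereas you split the bordism class itself via the trace of the $0$-surgeries and kill each prime summand separately; these are essentially dual formulations of the same surgery input. Your treatment of the maps to $S$ (asphericity of a wedge of circles, torsion-freeness of $F_l$ forcing constancy on space-form summands, factoring through $S^1$ on the $S^2\times S^1$ summands) matches the paper's obstruction-theoretic argument. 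The only point to state a bit more carefully is that Marques' theorem connects $g$ to $\Phi^*g_0$ for some diffeomorphism $\Phi$, so the reference map gets precomposed with $\Phi$ when you glue in the cylinder; this is harmless because your subsequent analysis of the maps is homotopy- and diffeomorphism-invariant, but it should be said.
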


Let us provide this result with some context. It has been known since the 1950's, 
see Rohlin \cite{Roh} and Thom \cite{Thom}, that every oriented $3$-manifold 
is the boundary of
some compact oriented $4$-manifold, which we refer to as a {\em{null-cobordism}}. 
There are many proofs of this fact, both constructive
and abstract, but it is far from clear whether or not null-cobordisms of 
psc $3$-manifolds may 
themselves be equipped with
psc metrics. However, for spherical space forms $S^3/\Gamma$ where $\Gamma\subset SO(4)$
acts freely on $S^3$, algebraic geometry provides promising null-cobordisms for our purposes.
Indeed, if $\Gamma\subset U(2)$, the minimal resolution of the flat cone $\mathbb{C}^2/\Gamma$, 
\begin{equation}
X\to\mathbb{C}^2/\Gamma
\end{equation}
often admits special geometries and large compact sets in $X$ are null-cobordisms of $S^3/\Gamma$.
We adopt this strategy and rely on the remarkable constructions of Lock-Viaclovsky \cite{LV19}. 
Another ingredient is the connectedness of
the moduli space of psc metrics on $3$-manifolds due to Marques \cite{Marques}.
We remark that a generalization of this connectedness result has recently been obtained by Carlotto-Li \cite{CL}.

\subsection{Structure of the paper and further discussion}\label{s:disc}
We begin in Section \ref{s:back} by recalling the notions and tools we will 
require throughout the paper.
These include conformal geometry on manifolds with boundary, the 
structure of $3$-manifolds admitting 
psc metrics, and the asymptotically locally Euclidean (ALE) metrics we use. 
The scalar-flat K{\"a}hler ALE metrics constructed by Lock-Viaclovky 
\cite{LV19} are of fundamental importance to our work --
large regions in these manifolds are the building blocks for the psc 
null-cobordisms we build to prove Theorem B in Section \ref{s:theoremB}.
Theorem A is proven in Section \ref{s:main}, though we relegate
a technical discussion to the Appendix. 
We conclude by establishing Corollaries A and B in Section \ref{s:cors}.

Now let us outline the proof of Theorem A and discuss the status of Conjecture 
\ref{con:schoen} in dimension $4$.
In Section \ref{s:main}, we prove Theorem A by following a strategy developed in \cite{LM}
and draw heavily from the analytical results there. Given a singular psc $4$-manifold $(M,g)$, 
we geometrically ``blow-up'' the tolerable singular set $S$ in such a way that a neck 
appears near each component of $S$.
In these necks, one finds minimal hypersurfaces. In analogy to the horizon of a black hole in
General Relativity, these hypersurfaces act as shields, protecting the regular part of the manifold
from the singular set. A classical argument of Schoen-Yau \cite{SY79} shows that these shields
admit psc metrics. Next, we excise the singular regions behind the shields and 
cap-off the resulting boundary with 
the null psc-cobordisms we construct in Theorem B, 
producing a {\em smooth} 
closed psc manifold $(\overline{M},\overline{g})$. 
Due to the nature of our psc null-cobordisms, $\overline{M}$ will generally not be 
diffeomorphic to $M$, having much larger second Betti number. 

Let us describe this point in more detail. The building-block ALE manifolds we use have non-trivial
second homology. Moreover, by considering the index of the Dirac operator, 
{\em{any}} (spin) null psc-cobordism of a non-trivial space form $S^3/\Gamma$
must have non-zero signature, see \cite{Dahl97} for a nice description of this phenomenon. 
To consider an example, it may be possible that one of the 
shielding hypersurfaces is the connected sum of a lens space with itself --
see \cite{Donald} for work on the underlying embeddability question. 
In our desingularization procedure, we cap-off this hypersurface by taking the boundary connected sum of two 
copies of the null psc-cobordism of the lens space,
introducing much new topology. That being said, we cannot rule out the existence of 
a more simple psc cap which would not increase topology. It is possible, 
therefore, that the change in topology is merely a defect of 
our methods.

Nevertheless, our desingularization $\overline{M}$ will admit a degree-$1$ 
map back to $M$ which collapses the added topology to the singular set. 
Because some components of $S$ may represent non-trivial elements of $\pi_1(M)$,
care must be taken to arrange for this degree-$1$ map to exist. Indeed, 
if one is not deliberate, the process of
excising and capping-off the singular set may have the effect of completely 
killing the fundamental group, potentially destroying
the possibility of such a degree-$1$ map. It is for this reason we require 
the triviality of $\Omega^{SO,+}_3(S)$ and 
not just the smaller group $\Omega^{SO,+}_3$.

We conclude this section with some informal remarks on 
Conjecture \ref{con:schoen}. 
In Theorem A we consider the
class of enlargeable manifolds because the desingularization of 
$M$ always has a degree-$1$ map back to $M$
and thus does not leave the class of enlargeable manifolds. However, 
there are many $4$-manifolds which are not enlargeable
yet do not admit psc metrics: certain manifolds with non-trivial Seiberg-Witten 
invariant. Unlike enlargeable manifolds, this class is not 
closed under non-zero degree maps and so Theorem A has no immediate
consequence for singular psc metrics on manifolds of this type. For instance, 
there is a homeomorphism
\[
\#^3\mathbb{C}P^2\#^{20}\overline{\mathbb{C}P^2}\to 
\mathrm{K}3\#\overline{\mathbb{C}P^2}.
\]
However, the one point blow up of K$3$ has non-trivial Seiberg-Witten 
invariant whereas the Gromov-Lawson 
construction \cite{GL80} shows that the connected sum of $\mathbb{C}P^2$'s 
admits psc metrics.
For this reason, we expect that either a much more refined version of our technique or
entirely new ideas are needed to approach Conjecture \ref{con:schoen} for general
$4$-manifolds.

The situation appears worse in higher dimensions since almost nothing
is known about the groups $\Omega^{SO,+}_n$ for $n\geq4$. 
For instance, in our proof of
Theorem B, we make full use of the classification of psc $3$-manifolds
and the connectedness of the moduli space of psc metrics on a fixed $3$-manifold.
Starting in dimension $4$, not only is there no classification of psc manifolds, but
the moduli space of psc metrics is known to be disconnected in general, see
\cite{Ruberman} for dimension $4$ and \cite{BERW} for higher dimensions.

\subsection{Acknowledgments}
I owe a debt of gratitude to Bradley Burdick for many 
enlightening conversations.
I would also like to thank Professor Michael Anderson, 
Professor Christina Sormani, Professor Marcus Khuri, and Professor Blaine Lawson
for their interest and helpful suggestions. 
For other helpful conversations, I thank Chao Li, Christos Mantoulidis, 
and Michael Albanese. The anonymous referee provided invaluable and helpful comments.

\section{Background}\label{s:back}

\subsection{Enlargeable manifolds}\label{s:en}
In this subsection, we will recall the enlargeability condition appearing in Theorem A and its
fundamental properties. The notion of enlargeability was originally introduced by
Gromov-Lawson in \cite{GL80} and substantially generalized in \cite{GL83}, which we refer
readers to for a general discussion.

We follow the language of \cite{GL83}, but omit technical considerations 
related to non-compactness and spin structures since we will not require 
them. For $\varepsilon>0$, a compact Riemannian $n$-dimensional manifold $(M,g)$ is 
said to be {\emph{$\varepsilon$-hyperspherical}} if there exists a map $f:M\to S^n$ satisfying
\begin{equation}
||df(V)||_{g_{rnd}}\leq\varepsilon ||V||_g
\end{equation}
for all non-zero vectors $V\in TM$ where $g_{rnd}$ denotes the round metric of radius $1$.
\begin{definition}\label{d:en}
A compact manifold $M$ is called {\emph{enlargeable}} if for all Riemannian metrics $g$ and $\varepsilon>0$, there exists
a cover $\overline{M}\to M$ which, equipping $\overline{M}$ with the lift of $g$, is $\varepsilon$-hyperspherical.
\end{definition}
The prototypical examples of enlargeable manifolds are compact manifolds admitting non-positive sectional curvature metrics and 
compact solvmanifolds such as the torus $T^n$. Enlargeability is
an invariant of the homotopy-type of the manifold. Even more, the class of enlargeable manifolds is closed under 
non-zero degree maps.
\begin{prop}\cite{GL80}\label{p:en}
Suppose $M^n$ is a closed enlargeable $n$-manifold.
If $N^n$ is a closed $n$-manifold and there exists a map $N\to M$ of non-zero degree,
then $N$ is also enlargeable.
\end{prop}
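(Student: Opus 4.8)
The plan is to derive the enlargeability of $N$ directly from the definition, pulling back the data witnessing enlargeability of $M$ along a covering of $N$ adapted to the given degree-$k$ map $\varphi\colon N\to M$. Fix a Riemannian metric $h$ on $N$ and $\varepsilon>0$; I must produce a cover $\overline{N}\to N$ which, with the lifted metric, admits an $\varepsilon$-contracting map of non-zero degree to $S^n$.

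First I would replace the metric on $M$ by one compatible with $\varphi$: since $\varphi$ is smooth and $N$ is compact, there is a constant $C\geq 1$ with $\varphi^*g_0 \leq C\, h$ for any fixed metric $g_0$ on $M$; equivalently, $\|d\varphi(V)\|_{g_0}\leq \sqrt{C}\,\|V\|_h$ for all $V\in TN$. Apply the enlargeability of $M$ to the metric $g_0$ and the parameter $\varepsilon' = \varepsilon/\sqrt{C}$: this yields a covering $p\colon \overline{M}\to M$ and a map $f\colon \overline{M}\to S^n$ which is $\varepsilon'$-contracting with respect to $p^*g_0$ and has nonzero degree.

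Next I would form the pullback cover. Let $\overline{N}$ be the fiber product of $\varphi\colon N\to M$ and $p\colon \overline{M}\to M$, i.e. $\overline{N}=\{(x,y)\in N\times\overline{M} : \varphi(x)=p(y)\}$, with projections $q\colon \overline{N}\to N$ and $\tilde\varphi\colon \overline{N}\to \overline{M}$. Since $p$ is a finite covering, $q$ is a finite covering of $N$ (possibly disconnected; if so, restrict to one component, which still covers $N$). The square is commutative, so $\varphi\circ q = p\circ\tilde\varphi$, and $\tilde\varphi$ has the same degree as $\varphi$, namely $k\neq 0$. Now set $\overline{f}=f\circ\tilde\varphi\colon \overline{N}\to S^n$. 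Its degree is $\deg(f)\cdot k\neq 0$. Equip $\overline{N}$ with $q^*h$. For $V\in T\overline{N}$,
\[
\|d\overline{f}(V)\|_{g_{rnd}} = \|df(d\tilde\varphi(V))\|_{g_{rnd}} \leq \varepsilon'\,\|d\tilde\varphi(V)\|_{p^*g_0} = \varepsilon'\,\|d\varphi(dq(V))\|_{g_0} \leq \varepsilon'\sqrt{C}\,\|dq(V)\|_h = \varepsilon\,\|V\|_{q^*h},
\]
using $p^*g_0$-isometry of $d p$ on tangent spaces and the commutativity of the square. Hence $\overline{f}$ is $\varepsilon$-contracting, so $\overline{N}$ is $\varepsilon$-hyperspherical, proving $N$ enlargeable.

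The only genuinely delicate point is the behavior of the pulled-back covering: one must check that $\overline{N}\to N$ is again a covering of finite degree and, if one insists on connected covers in Definition \ref{d:en}, that some component maps onto $N$ with the degree of $\overline{f}$ restricted to it still nonzero. This follows because a connected component of $\overline{N}$ maps onto $N$ (the image is open and closed), and over such a component $\tilde\varphi$ still has degree a nonzero multiple of $k$; alternatively one notes that the $\varepsilon$-hyperspherical condition and the nonvanishing of degree are insensitive to passing to a further finite cover, so no connectedness bookkeeping is truly needed. Everything else is a routine chain-rule estimate as displayed above.
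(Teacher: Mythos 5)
Your proof is correct and is essentially the standard Gromov--Lawson argument: pull back the hyperspherical cover of $M$ along the nonzero-degree map via the fiber product, lift the map, compose with the contracting map to $S^n$, and track the Lipschitz constant and the multiplicativity of degree. The paper itself offers no proof, only the citation to \cite{GL80}, and your argument (including the bookkeeping about components of the pullback cover, which is the only delicate point under the paper's tacit restriction to compact covers) matches the cited one.
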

\noindent A basic application of Proposition \ref{p:en}: the connected sum $T^n\# M$ of a torus 
with a closed manifold $M$ is enlargeable by considering the map which collapses $M$ to a point.

By considering a twisted Dirac operator, Gromov-Lawson made the fundamental 
observation that enlargeable spin manifolds 
cannot admit psc metrics \cite[Theorem A]{GL83}. Recently,
Cecchini-Schick have generalized this to the non-spin case.
\begin{theorem}\cite[Theorem A]{CS}\label{t:en}
A closed enlargeable $n$-dimensional manifold cannot admit a psc metric.
\end{theorem}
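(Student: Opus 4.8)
The plan is to run the twisted Dirac operator argument of Gromov-Lawson \cite{GL83} when $M$ is spin and, when it is not, to substitute for it a spin-free obstruction as in \cite{CS}. Suppose for contradiction that a closed enlargeable manifold $M^n$ carries a metric $g$ with $R_g>0$; rescaling, take $R_g\geq 1$. By Definition \ref{d:en}, used in the form of \cite{GL83} so that the hyperspherical maps have nonzero degree and are locally constant outside a compact set, for every $\varepsilon>0$ there is a covering $\pi_\varepsilon\colon\widehat M_\varepsilon\to M$ and an $\varepsilon$-contracting map $f_\varepsilon\colon(\widehat M_\varepsilon,\pi_\varepsilon^*g)\to S^n$ of nonzero degree. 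Since $M\times S^1$ is again enlargeable and admits a psc metric (a Riemannian product with a circle) whenever $M$ does, we may assume $n$ is even; we also treat the oriented case, the general one following from a standard $\mathbb{Z}/2$-coefficient variant.

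When $M$ is spin, fix a Hermitian bundle $E_0\to S^n$ with a unitary connection which is trivial, with trivial connection, near a point $p_0\in S^n$ and satisfies $\int_{S^n}\mathrm{ch}(E_0)\neq 0$ (a Bott-type generator; here one uses $\widetilde{K}^0(S^n)\neq 0$ for $n$ even). Set $E_\varepsilon=f_\varepsilon^*E_0$ with the pulled-back connection; arranging $f_\varepsilon\equiv p_0$ outside a compact set, $E_\varepsilon$ is trivialized with trivial connection outside that set, and because $f_\varepsilon$ is $\varepsilon$-contracting its curvature satisfies $\|R^{E_\varepsilon}\|_{\pi_\varepsilon^*g}\leq C(E_0)\,\varepsilon^2$. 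Lifting the spin structure to $\widehat M_\varepsilon$ and forming the $E_\varepsilon$-twisted Dirac operator $D_{E_\varepsilon}$, the Lichnerowicz formula reads $D_{E_\varepsilon}^2=\nabla^*\nabla+\tfrac{1}{4} R_g+\mathcal R^{E_\varepsilon}$ with $\|\mathcal R^{E_\varepsilon}\|\leq C'\varepsilon^2$. Choosing $\varepsilon$ small enough that $\tfrac{1}{4} R_g+\mathcal R^{E_\varepsilon}\geq\tfrac{1}{8}$ everywhere makes $D_{E_\varepsilon}$ invertible on $L^2$, and likewise the direct sum of $\mathrm{rk}\,E_\varepsilon$ copies of the untwisted Dirac operator; as these two operators agree outside a compact set, their relative index vanishes. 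The Gromov-Lawson relative index theorem, on the other hand, evaluates this relative index as the compactly supported integral $\int_{\widehat M_\varepsilon}\widehat A(\widehat M_\varepsilon)\bigl(\mathrm{ch}(E_\varepsilon)-\mathrm{rk}\,E_\varepsilon\bigr)$; since $\mathrm{ch}(E_\varepsilon)-\mathrm{rk}\,E_\varepsilon$ lies in top degree, only the degree-zero part of $\widehat A$ contributes, and the integral equals $\deg(f_\varepsilon)\int_{S^n}\mathrm{ch}(E_0)\neq 0$. This contradiction disposes of the spin case.

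The main obstacle is the non-spin case, which is exactly the content of \cite{CS}. When $w_2(M)\neq 0$ there is no Dirac operator, and the class $w_2$ may survive in every cover of $M$---for instance the connected sum $\mathbb{C}P^2\#T^4$ is enlargeable but has non-spin universal cover---so one cannot reduce to a spin cover, even though enlargeability does force $\pi_1(M)$ to be infinite. A spin-free replacement for the index obstruction is therefore needed; the natural candidate is the minimal-hypersurface technique of Schoen-Yau \cite{SY79} together with the $\mu$-bubble refinements of Gromov \cite{Gro}. One feeds the enlargeability data into a codimension-one weighted-area minimization inside $\widehat M_\varepsilon$ separating the two polar regions of $f_\varepsilon$; the large width, of order $1/\varepsilon$, of the intervening band should force the resulting stable hypersurface to carry, via the Schoen-Yau stability inequality, a psc metric together with a contracting map of nonzero degree to a lower-dimensional sphere, so that it is again enlargeable-like in one lower dimension. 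Iterating this, while in high dimensions controlling the singular set of the minimizer by Schoen-Yau's regularity theory, one descends to dimension $2$, where the Gauss-Bonnet theorem closes the loop; equivalently, one proves that the $S^n$-hyperspherical radius of a closed psc $n$-manifold is bounded above in terms of $\inf R_g$.

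The delicate points in this last step are the choice, at each stage of the descent, of the cycle or $\mu$-bubble to minimize over, and the verification that the appropriate enlargeability-type hypothesis is inherited by the hypersurface; this is where the genuinely new ideas of \cite{CS} are needed, since the naive preimage-of-an-equator slicing fails (the sphere $S^n$ carries no first cohomology). For the four-dimensional application used in Corollary A, however, only two classical Schoen-Yau descents are required, with no minimal-hypersurface singularities to contend with.
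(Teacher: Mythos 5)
First, a point of calibration: the paper does not prove Theorem \ref{t:en} at all. The statement is imported verbatim from \cite{CS}, accompanied only by a one-sentence gloss that the proof is ``a clever construction related to Gromov's torical bands \cite{Gro18}'' combined with the minimal $k$-slicings of Schoen--Yau \cite{SY17}, plus the remark that the dimension-$4$ applications avoid the hard regularity theory. So there is no internal proof to compare against, and your proposal has to be judged as a standalone argument.

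Your spin case is the classical Gromov--Lawson relative-index argument and is essentially correct as sketched: the reduction to even $n$ by crossing with $S^1$, the quadratic bound $\|R^{E_\varepsilon}\|\leq C\varepsilon^2$ on the curvature of the pulled-back Bott bundle, the Lichnerowicz vanishing forcing both twisted and untwisted operators to be invertible, and the evaluation of the relative index as $\deg(f_\varepsilon)\int_{S^n}\mathrm{ch}(E_0)\neq 0$ are all in order (the covers are complete, being covers of closed manifolds, so the relative index theorem applies). The genuine gap is the non-spin case, which is the \emph{entire} content of the theorem as cited: there your text is a program, not a proof. You correctly diagnose why the naive descent fails (no first cohomology on $S^n$ to slice along, and no spin cover to retreat to), and you correctly name the ingredients \cite{CS} actually uses --- a band construction fed into the Schoen--Yau stable-slicing machinery, equivalently an upper bound on the hyperspherical radius of a closed psc manifold in terms of $\inf R_g$ --- but the construction that converts the $\varepsilon$-contracting map into a minimization problem whose stable solutions inherit an enlargeability-type condition in one lower dimension is precisely the new idea of \cite{CS}, and your proposal explicitly defers it (``this is where the genuinely new ideas of \cite{CS} are needed''). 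As an account of why the theorem is true and where it comes from, this is accurate and somewhat more informative than the paper's own treatment; as a proof it is incomplete at exactly the point where the theorem goes beyond Gromov--Lawson. Your closing observation that only low-dimensional descents, with no singular sets, are needed for the $4$-dimensional application does match the remark the paper makes after stating the theorem.
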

The proof of Theorem \ref{t:en} involves a clever construction related
to Gromov's torical bands \cite{Gro18} and 
involving the notion of {\emph{minimal $k$-slicings}}
introduced by Schoen-Yau in the preprint \cite{SY17}. We remark that our applications of Theorem \ref{t:en}
are in dimension $4$ and therefore do not require the full regularity theory developed in \cite{SY17}.

\subsection{The conformal Laplacian with minimal boundary conditions}
In later sections we will require some basic facts about
the Yamabe problem on compact manifolds. For brevity, we will treat the cases in which 
the manifold has empty and non-empty boundary simultaneously.

Now let $(W,\bar h)$
be an $n$-dimensional manifold with possibly non-empty boundary $(\p W, h)$ 
where $h=\bar h|_{\p W}$. We consider the following pair of operators acting on
$C^\infty(W)$:
\begin{equation}\label{e:boundaryconf}
\left\{
\begin{array}{lcll}
  \mathcal{L}_{\bar h} &=& -\Delta_{\bar h}+c_n R_{\bar h} 
 & \mbox{in $W$}
  \\
\mathcal{B}_{\bar h} & = & \p_\nu+2c_nH_{\bar h}  & \mbox{on $\p W$},
\end{array}
\right.
\end{equation}
where $c_n=\frac{4(n-1)}{n-2}$ and $H_{\bar{h}}$ is the mean curvature
of $\p W$ with respect to $\nu$, the outward-pointing normal vector.

Recall that if $\phi\in C^\infty(W)$ is a positive
function, then the scalar and boundary mean curvatures of the
conformal metric $\tilde h=\phi^{\frac{4}{n-2}}\bar h$ are given by
\begin{equation}\label{eq:conformal}
\left\{
\begin{array}{lcll}
  R_{\tilde{h}} &=& c_n^{-1} \phi^{-\frac{n+2}{n-2}} \cdot \mathcal{L}_{\bar
    h}\phi & \mbox{in $W$} \\ H_{\tilde{h}} & = &
  \frac{1}{2}c_n^{-1}\phi^{-\frac{n}{n-2}} \cdot \mathcal{B}_{\bar h}\phi &
  \mbox{on $\p W$}.
\end{array}
\right.
\end{equation}
We consider the Rayleigh quotient coming from (\ref{e:boundaryconf}) and take the infimum:
\begin{equation}\label{eq:Rayleigh}
\lambda_1=\inf_{\phi\not\equiv0\in H^1(W)}
\frac{\int_W\left(|\nabla\phi|^2+c_nR_{\bar h}\phi^2\right)d\mu+2c_n\int_{\p W}H_{\bar h}\phi^2d\sigma}{\int_W\phi^2d\mu}.
\end{equation}
According to standard elliptic PDE theory, we obtain an elliptic
boundary problem, denoted by $(\mathcal{L}_{\bar h}, \mathcal{B}_{\bar h})$,
and $\lambda_1$ is
the principal eigenvalue of the minimal boundary problem
$(\mathcal{L}_{\bar h}, \mathcal{B}_{\bar h})$. The corresponding Euler-Lagrange
equations are the following:
\begin{equation}
\label{eq:eigenvalue}
\left\{
\begin{array}{lcll}
  \mathcal{L}_{\bar h}\phi &= & \lambda_1\phi& \text{ in }W
  \\
  \mathcal{B}_{\bar h}\phi &= & 0 &\text{ on } \p W.
\end{array}
\right.
\end{equation}  
This problem was first studied by Escobar \cite{E1} in the context of
the Yamabe problem on manifolds with boundary.  

Let $\phi$ be a solution of (\ref{eq:eigenvalue}).  It is
well-known that the eigenfunction $\phi$ is smooth and can be chosen to
be positive.  
A straight-forward computation shows that the conformal metric $\tilde h=
\phi^{\frac{4}{n-2}}\bar h$ has the following scalar and mean
curvatures:
\begin{equation}\label{eq:6}
  \left\{
\begin{array}{lcll}
  R_{\tilde  h} &= & \lambda_1 \phi^{-\frac{4}{n-2}} & \mbox{in $W$}
\\
  H_{\tilde  h} &\equiv &  0 & \mbox{on $\p W $}.
\end{array}
\right. 
\end{equation} 
In particular, the sign of the eigenvalue $\lambda_1$ is a 
conformal invariant, see \cite{E1,E3}. We call a conformal manifold
$(W,[h])$ Yamabe positive, null, or negative according to the sign of $\lambda_1$.

The link between the above conformal considerations and psc bordisms
is given by Akutagawa-Botvinnik in \cite{AB}. We will
use the following approximation trick throughout the paper.
\begin{theorem}\cite[Corollary B]{AB}\label{t:AB}
Let $W^n$ be a manifold of dimension $n\geq3$ with non-empty boundary $\p W$.
Let $g$ be a psc metric on $\p W$. The following are equivalent:
\begin{enumerate}
\item There is a metric $\bar{g}$ on $W$ so that $g\in[\bar{g}|_{\p W}$ and $(W,[\bar{g}])$
is Yamabe positive;
\item There is a psc metric $\tilde{g}$ on $W$ which takes the product form $\tilde{g}=g+dt^2$
on a neighborhood of $\p W$.
\end{enumerate}
\end{theorem}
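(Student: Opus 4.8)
The plan is to treat the two implications separately: $(2)\Rightarrow(1)$ is essentially a restatement of the variational characterization of $\lambda_1$, while $(1)\Rightarrow(2)$ is the substantive direction. For $(2)\Rightarrow(1)$ I would simply take $\bar g=\tilde g$. Then the boundary restriction is literally $g$, so $g\in[\bar g|_{\p W}]$; the product collar $\tilde g=g+dt^2$ makes $\p W$ totally geodesic, so $H_{\bar g}\equiv0$ and the boundary integral in the Rayleigh quotient \eqref{eq:Rayleigh} vanishes; and since $W$ is compact with $R_{\bar g}>0$ there is $\delta>0$ with $R_{\bar g}\geq\delta$, whence $\int_W\bigl(|\nabla\phi|^2+c_nR_{\bar g}\phi^2\bigr)\,d\mu\geq c_n\delta\int_W\phi^2\,d\mu$ for every $\phi\in H^1(W)$, forcing $\lambda_1\geq c_n\delta>0$. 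Thus $(W,[\bar g])$ is Yamabe positive.

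For $(1)\Rightarrow(2)$ I would argue in three steps. \emph{Step 1 (normalize the boundary).} Writing $g=u^{\frac{4}{n-2}}(\bar g|_{\p W})$ for a positive $u\in C^\infty(\p W)$, extending $u$ to a positive function on $W$, and replacing $\bar g$ by $u^{\frac{4}{n-2}}\bar g$, the metric stays Yamabe positive because the sign of $\lambda_1$ is a conformal invariant (as recorded after \eqref{eq:6}), and now $\bar g|_{\p W}=g$ exactly. \emph{Step 2 (a psc representative with minimal boundary).} Yamabe positivity says the principal eigenvalue $\lambda_1$ of $(\mathcal L_{\bar g},\mathcal B_{\bar g})$ in \eqref{eq:eigenvalue} is positive; for its positive eigenfunction $\psi$ the metric $g_1:=\psi^{\frac{4}{n-2}}\bar g$ satisfies $R_{g_1}=\lambda_1\psi^{-\frac{4}{n-2}}>0$ on $W$ and $H_{g_1}\equiv0$ on $\p W$ by \eqref{eq:6}, and its induced boundary metric $h_1:=g_1|_{\p W}$ lies in $[g]$, a conformal class that is Yamabe positive since it contains the psc metric $g$. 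These two steps are formal consequences of the conformal transformation laws \eqref{eq:conformal}--\eqref{eq:6} and the existence of a positive principal eigenfunction for the minimal boundary problem.

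\emph{Step 3 (install an exact product collar with boundary metric $g$).} This is the analytic heart, and is precisely \cite[Corollary B]{AB}. In a collar $\p W\times[0,T)$ write $g_1=dt^2+h_t$, so $h_0=h_1$ and $\operatorname{tr}_{h_0}\bigl(\p_t h_t|_{t=0}\bigr)=0$ by minimality; I would deform $g_1$ inside this collar, keeping $R>0$, until it becomes the product $g+dt^2$ near $\p W$. Two ingredients combine: first, a Gromov--Lawson-type bending of the collar \cite{GL80} — because a sufficiently bent neck contributes positively to the scalar curvature, a psc metric with \emph{minimal} boundary can be pushed to a psc metric that is a genuine product $h'+dt^2$ near $\p W$ for a psc metric $h'$ on $\p W$; second, the fact that the psc metrics inside the Yamabe-positive conformal class $[g]$ on the closed manifold $\p W$ form a \emph{convex}, hence path-connected, family (parametrizing by the conformal factor, positivity of the factor and positivity of its image under the linear conformal Laplacian of $(\p W,g)$ are both convex conditions — and when $\dim\p W=2$ one argues instead via uniformization). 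Arranging $h'$ to be conformal to $g$, a slowly varying psc path in $[g]$ from $g$ to $h'$ can then be inserted as a warped collar, and splicing the product $g$-collar, this path, the bent collar, and the interior metric $g_1$ yields the required $\tilde g$. I expect Step 3 to be the main obstacle: one must keep $R$ strictly positive uniformly across both the bending and the psc path in $[g]$ — the speed of each deformation having to be tuned against the positive lower bound on $R$ — and one must land on $g$ itself rather than merely a conformal multiple of it; this is exactly where the real work of \cite{AB} lies.
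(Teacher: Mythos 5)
First, a point of orientation: the paper does not prove this statement at all --- it is quoted from Akutagawa--Botvinnik \cite{AB}, whose proof runs through their relative Yamabe invariant and conformal cobordism machinery --- so your proposal has to stand on its own. On its own terms, your direction $(2)\Rightarrow(1)$ is correct: the product collar makes $H\equiv0$, killing the boundary term in \eqref{eq:Rayleigh}, and $R_{\tilde g}\geq\delta>0$ on the compact $W$ forces $\lambda_1>0$. Steps 1--2 of $(1)\Rightarrow(2)$ are also fine and standard: conformal invariance of the sign of $\lambda_1$ lets you normalize $\bar g|_{\p W}=g$, and the positive principal eigenfunction of $(\mathcal{L}_{\bar g},\mathcal{B}_{\bar g})$ gives $g_1$ with $R_{g_1}>0$, $H_{g_1}\equiv0$ and $g_1|_{\p W}\in[g]$.

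The gap is Step 3, and it is not a technicality. An exact product collar $h'+dt^2$ has scalar curvature $R_{h'}$, so any deformation to a product forces the terminal boundary metric to be psc; but the induced boundary metric of a psc metric with merely minimal boundary need not be psc (here only its conformal class is Yamabe positive, and that by hypothesis, not by any stability mechanism), so a bending that retains the induced boundary metric cannot succeed, and the Gromov--Lawson bending you invoke is a surgery/hypersurface tool that does not, as stated, produce a product collar whose slice metric you control. Moreover your rescue step needs $h'\in[g]$: the convexity observation (positivity of the conformal factor and of its image under the linear conformal Laplacian are convex conditions) is correct and does give path-connectedness of psc metrics \emph{within} $[g]$, but nothing in the sketch forces the bent collar to land in $[g]$, and outside a fixed conformal class connectivity of psc metrics fails in general once $\dim\p W\geq4$ (as the paper itself recalls), so the argument cannot fall back on connectedness of the whole psc space. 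In effect, the entire analytic content of \cite[Corollary B]{AB} --- passing from a Yamabe-positive relative conformal class to a psc metric with an exact collar $g+dt^2$, with boundary metric $g$ itself and not merely something conformal to it --- is asserted rather than proved. You acknowledge this, but it means your proposal is a correct reduction to, plus a citation of, the theorem being proved, not a proof of it.
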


\subsection{Psc $3$-manifolds}
In order to prove Theorem B, we will need the classification 
of closed oriented $3$-manifolds admitting psc metrics.
More precisely, the following result is
a consequence of Gromov-Lawson \cite[Theorem 8.1]{GL83},
Schoen-Yau \cite{SY79b},
and Perelman's solution of the geometrization conjecture 
\cite{per1,per2,per3}.
\begin{prop}\label{p:classification}
Suppose $M$ is a closed oriented $3$-manifold. Then 
$M$ admits a psc metric if and only if it is 
diffeomorphic to the 
connected sum $\#_{i=1}^kM_i$ where 
each $M_i$ is one of the following:
\begin{enumerate}
\item $S^2\times S^1$;
\item $S^3/\Gamma$ where $\Gamma\subset SO(4)$ is a finite subgroup acting 
freely on $S^3$.
\end{enumerate}
\end{prop}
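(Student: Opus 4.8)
The plan is to prove the two implications separately. The ``if'' direction is a routine application of the surgery theorem for positive scalar curvature; the ``only if'' direction is where the substance lies, and it is obtained by combining the prime decomposition with the cited results of Schoen--Yau, Gromov--Lawson, and Perelman.

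For the ``if'' direction, I would first record psc metrics on the building blocks: the product of a round $S^2$ with a standard circle has constant positive scalar curvature on $S^2\times S^1$, and for $\Gamma\subset SO(4)$ acting freely on $S^3$ the round metric is $\Gamma$-invariant and descends to a psc metric on $S^3/\Gamma$. One then invokes the Gromov--Lawson \cite{GL80} and Schoen--Yau \cite{SY79} surgery theorem: since a connected sum of $3$-manifolds is the result of a surgery in codimension $3$, the existence of psc metrics is preserved, so $\#_{i=1}^k M_i$ admits a psc metric whenever each $M_i$ is on the list. For the ``only if'' direction, suppose $M$ admits a psc metric. By the Kneser--Milnor prime decomposition, $M\cong M_1\#\cdots\# M_k$ with each $M_i$ prime and oriented, and by the sphere theorem every prime oriented $3$-manifold is either $S^2\times S^1$ or irreducible (the twisted $S^2$-bundle over $S^1$ does not occur since it is non-orientable). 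It therefore suffices to identify each irreducible summand. Here one uses that an irreducible oriented $3$-manifold with infinite fundamental group is aspherical (via the sphere theorem and Hurewicz), and that a connected sum containing an aspherical prime summand admits no psc metric: collapsing the remaining summands gives a degree-one map onto the aspherical piece, which is enlargeable, so Proposition \ref{p:en} and Theorem \ref{t:en} give a contradiction -- equivalently, this is the content of the minimal-surface argument of Schoen--Yau \cite{SY79b} (and is part of \cite[Theorem 8.1]{GL83}). Thus every irreducible $M_i$ has finite fundamental group, and Perelman's geometrization theorem \cite{per1,per2,per3} in the elliptization case identifies such an $M_i$ with $S^3/\Gamma$ for a finite $\Gamma\subset SO(4)$ acting freely on $S^3$. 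Assembling these identifications proves the proposition.

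The main obstacle -- really, the place where all the depth is concentrated -- is the ``only if'' direction's dependence on two hard external inputs: the scalar-curvature obstruction for aspherical $3$-manifolds, which eliminates aspherical prime summands, and Perelman's resolution of the elliptization conjecture, which upgrades ``irreducible with finite $\pi_1$'' to a genuine spherical space form $S^3/\Gamma$ with $\Gamma\subset SO(4)$. By contrast, the purely topological bookkeeping (prime decomposition, the sphere theorem, and excluding the non-orientable $S^2$-bundle using that $M$ is oriented) is elementary, and the ``if'' direction is immediate from the surgery theorem.
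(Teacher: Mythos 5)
Your proof is correct and follows exactly the route the paper intends: the paper offers no argument of its own, simply attributing the proposition to \cite[Theorem 8.1]{GL83}, \cite{SY79b}, and Perelman's geometrization, and your write-up assembles the standard proof from precisely those ingredients (surgery theorem for the ``if'' direction; prime decomposition, exclusion of aspherical summands, and elliptization for the ``only if'' direction). The only point worth tightening is the aside that the aspherical summand is ``enlargeable'' -- that is true for closed aspherical $3$-manifolds but itself rests on geometrization, so it is cleaner to lean directly on \cite[Theorem 8.1]{GL83} and \cite{SY79b} as you do in your parenthetical.
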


Another ingredient for the proof of Theorem B is
the work of Marques in \cite{Marques} on the space of psc metrics on $3$-manifolds.
To state it, for a manifold $M$, let $\mathrm{Riem}^+(M)$ denote the space
of psc metrics on $M$, endowed with the Whitney topology. The group of
diffeomorphisms of $M$, denoted $\mathrm{Diff}(M)$, acts on $\mathrm{Riem}^+(M)$ and
one may form the {\emph{moduli space of psc metrics}}
\begin{equation}
\mathcal{M}_+(M)=\mathrm{Riem}^+(M)/\mathrm{Diff}(M).
\end{equation}
By making careful use of Perelman's Ricci flow with surgery and the Gromov-Lawson
surgery construction, Marques obtained the following connectedness result.
\begin{theorem}\cite[Main Theorem]{Marques}\label{t:connectedness}
Suppose $M$ is a closed oriented $3$-manifold which admits a psc metric.
Then the moduli space $\mathcal{M}_+(M)$ is path connected.
\end{theorem}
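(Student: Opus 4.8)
The plan is to reduce path-connectedness of $\mathcal{M}_+(M)$ to the single statement that every psc metric on $M$ can be joined, within $\mathcal{M}_+(M)$, to one \emph{canonical} metric $g_{\mathrm{can}}$ depending only on the diffeomorphism type of $M$. By Proposition \ref{p:classification} and the uniqueness of the prime decomposition of oriented $3$-manifolds, $M$ is diffeomorphic to a connected sum of pieces $S^3/\Gamma$ and $S^2\times S^1$ in an essentially unique way; put a round metric on each spherical piece and the standard product metric on each $S^2\times S^1$ piece, and assemble these using Gromov--Lawson connected-sum necks. A preliminary lemma records that $g_{\mathrm{can}}$ is well defined up to a path in $\mathrm{Riem}^+(M)$ followed by a diffeomorphism: the connected-sum construction depends on choices of embedded balls and of neck parameters, any two sets of which are related by an ambient isotopy of $M$ (the disk theorem) and a continuous deformation of the neck, both carried out through psc metrics.

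The engine is Perelman's Ricci flow with surgery, of which two features are used. First, positivity of scalar curvature is preserved: along smooth parts of the flow the minimum of the scalar curvature satisfies $\partial_t R_{\min}\ge \tfrac{2}{3}R_{\min}^2\ge 0$, and Perelman's surgeries glue in standard caps of large positive scalar curvature, so the post-surgery metric again lies in $\mathrm{Riem}^+$. Second, since every prime summand of $M$ is a spherical space form or an $S^2\times S^1$, the Ricci flow with surgery starting from an arbitrary metric on $M$ becomes extinct in finite time. Hence, starting from a given $g\in\mathrm{Riem}^+(M)$, the flow furnishes a continuous path in $\mathrm{Riem}^+$ on each surgery-free time interval, the whole manifold is eventually exhausted, and just before its extinction time each surviving component is $\varepsilon$-close (after rescaling) to a round spherical space form or to a thin round $S^2\times S^1$.

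The crux is to absorb the surgeries into paths in the moduli space and to run an induction on the number of prime summands. At each surgery time Perelman's canonical neighborhood theorem guarantees that the region being operated on is a long, thin $\varepsilon$-neck, i.e.\ after rescaling it is $C^{[1/\varepsilon]}$-close to $S^2\times(-1/\varepsilon,1/\varepsilon)$ with the round cylinder metric. Using this and a Gromov--Lawson-type interpolation, one deforms the pre-surgery metric on the current manifold, through psc metrics, to one that is exactly a round cylinder of small radius on the neck; cutting along the central sphere and capping with standard hemispheres then alters the metric only inside a controlled ball, and one checks directly that this cut-and-cap, and its reversal, can each be realized by a psc path after applying the appropriate diffeomorphism. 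Applying this at every surgery, using induction on the pieces (the base case being a component that shrinks to a round spherical space form or to a thin round $S^2\times S^1$, where uniqueness of constant-curvature metrics up to scale and diffeomorphism, respectively a direct normalization of the standard metric, closes the argument), and then invoking the preliminary lemma, one connects $g$ first to a ``Gromov--Lawson connected sum of round pieces'' and then to $g_{\mathrm{can}}$.

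I expect the delicate step to be the third one---making the inherently discrete surgery process continuous in $\mathcal{M}_+(M)$. This requires quantitative control, via Cheeger--Gromov compactness together with the canonical neighborhood estimates, of how close the pre-surgery metric is to the cylindrical model on larger and larger portions of the neck; a careful Gromov--Lawson interpolation that deforms to the exact model while never losing positivity of scalar curvature; and the bookkeeping of the induction, including the cases where a component vanishes entirely or splits. The remaining inputs---preservation of $R>0$ under the flow and under surgery, finite-time extinction for manifolds with only spherical and $S^2\times S^1$ summands, and the topological uniqueness of the prime decomposition---are standard consequences of Perelman's work and of classical $3$-manifold topology.
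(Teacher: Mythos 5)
This statement is not proven in the paper at all: Theorem \ref{t:connectedness} is quoted verbatim from \cite{Marques}, and the paper's only ``proof'' is the one-sentence attribution that Marques obtained it ``by making careful use of Perelman's Ricci flow with surgery and the Gromov--Lawson surgery construction.'' Your outline is exactly that strategy, so you have reconstructed the approach of the cited source rather than diverged from anything in the present paper: reduce to connecting an arbitrary psc metric to a canonical Gromov--Lawson connected sum of round space forms and round $S^2\times S^1$'s (using Proposition \ref{p:classification} and uniqueness of the prime decomposition, which is why one must pass to the moduli space $\mathcal{M}_+(M)$ rather than $\mathrm{Riem}^+(M)$), run Ricci flow with surgery using preservation of $R>0$ and finite-time extinction, and absorb each surgery into a psc path via the canonical neighborhood theorem and a Gromov--Lawson-type interpolation, with a backwards induction on the surgery times.

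That said, be aware of what your write-up actually establishes. The three ingredients you call standard (preservation of $R_{\min}$, finite extinction for manifolds with no aspherical summands, uniqueness of the connected-sum decomposition) are indeed standard, but the two steps you yourself flag as delicate are not lemmas one ``checks directly'': (i) the well-definedness of $g_{\mathrm{can}}$ up to psc isotopy and diffeomorphism, i.e.\ independence of the Gromov--Lawson construction from the choice of gluing spheres and neck parameters, and (ii) the statement that the pre-surgery metric can be deformed through psc metrics to an exact round cylinder on the $\varepsilon$-neck so that cutting, capping, and reversing the surgery are realized by paths in $\mathcal{M}_+$. These two items, together with the bookkeeping when components disappear or split, constitute essentially the whole technical content of \cite{Marques}; as written, your argument is a correct road map to that proof rather than a proof. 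For the purposes of this paper nothing more is needed, since the theorem is imported as a black box, but if you intended a self-contained argument you would have to supply the quantitative interpolation near the necks and the isotopy-uniqueness lemma in full.
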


\subsection{ALE manifolds and mass}\label{s:ALEpre}
The final ingredient we we will require is a family of asymptotically locally Euclidean (ALE) 
metrics. We refer the reader to \cite{Bart} for a complete discussion of such spaces
and their fundamental analytic properties. 
\begin{definition}\label{d:ALE}
Let $\Gamma\subset SO(n)$ be a finite subgroup acting freely on $S^{n-1}$ and let $\tau>0$.
A complete $n$-dimensional Riemannian manifold $(X,g)$ is said to be 
{\emph{ALE of order $\tau$ with group at infinity $\Gamma$}}
if
\begin{enumerate}
\item There is a compact subset $K\subset X$ and a diffeomorphism called a {\em{chart at infinity}}
\[
\Psi:X\setminus K\to (\mathbb{R}^n\setminus B)/\Gamma
\]
where $B\subset \mathbb{R}^n$ is a ball centered at the origin;
\item Denoting the radial coordinate on $\mathbb{R}^n/\Gamma$ by $r$, we have
\[
|\partial^\alpha((\Psi^{-1})^*g_{ij}-\delta_{ij})|=\mathcal{O}(r^{-|\alpha|-\tau})
\]
for any multi-index $\alpha$ with $|\alpha|=0,1,2$ and $\delta$ is the Euclidean metric.
\end{enumerate}
\end{definition}
\noindent If an ALE manifold has trivial group at infinity, the manifold is said to be {\emph{asymptotically flat}}.

If one fixes a base point $p$ in an ALE manifold $X$, notice that large geodesic spheres $S(p,R)$
become asymptotic to a spherical space form $S^{n-1}/\Gamma$. This space form is independent of $p$
and we call $S^{n-1}/\Gamma$ the {\emph{sphere at infinity}} of $X$.

We will be interested in $4$-dimensional ALE manifolds with non-negative scalar curvature. In general, it is a difficult problem to find scalar non-negative ALE manifolds with a given group $\Gamma\subset SO(4)$ at infinity. Such constructions have a history dating back to \cite{Hawk} too rich to cover here, so we will be brief. In \cite{Kron1,Kron2}, Kronheimer produced hyper-K{\"a}hler (and hence Ricci-flat) ALE manifolds for any finite $\Gamma\subset SU(2)\subset SO(4)$ at infinity. The negative-mass metrics constructed by LeBrun in \cite{LeBrun} provide examples of scalar-flat K{\"a}hler ALE metrics having cyclic groups at infinity not contained $SU(2)$. Calderbank-Singer \cite{CSing} constructed Ricci-flat ALE metrics with any given $3$-dimensional lens space as the sphere at infinity.

However, it was not until recently that {\emph{every}} finite $\Gamma\subset U(2)\subset SO(4)$ acting freely on $S^3$ was shown to arise as the group at infinity of a scalar-flat ALE manifold. Through a delicate assembly, Lock-Viaclovsky were able to piece together the known constructions to obtain the following:
\begin{theorem}\cite[Theorem 1.3]{LV19}\label{t:smorg}
Let $\Gamma\subset U(2)$ be a finite group containing no complex reflections. Then there are scalar-flat K{\"a}hler ALE metrics on the minimal resolution of $\mathbb{C}^2/\Gamma$.
\end{theorem}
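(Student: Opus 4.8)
\textbf{Proof proposal for Theorem \ref{t:smorg} (the Lock--Viaclovsky existence theorem).}

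The plan is to reduce the construction for an arbitrary finite $\Gamma\subset U(2)$ without complex reflections to the two families where scalar-flat K\"ahler ALE metrics are already known, and then glue. First I would recall the structure theory of such $\Gamma$: acting freely on $S^3$ forces $\Gamma$ to be one of the spherical space-form groups, and one sorts these into two basic cases. Case (a): $\Gamma\subset SU(2)$, where Kronheimer's hyper-K\"ahler ALE metrics on the minimal resolution of $\mathbb C^2/\Gamma$ (the ALE gravitational instantons of type $A_k,D_k,E_6,E_7,E_8$) are already scalar-flat (indeed Ricci-flat) K\"ahler, so there is nothing to prove. Case (b): $\Gamma$ is cyclic, where LeBrun's negative-mass metrics and, more generally, the Calderbank--Singer toric construction produce scalar-flat K\"ahler ALE metrics on the minimal resolution of every cyclic quotient singularity $\mathbb C^2/\mathbb Z_m$ (including those not contained in $SU(2)$). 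The heart of the argument is that a general non-cyclic, non-$SU(2)$ group $\Gamma$ has a normal cyclic subgroup $A\triangleleft\Gamma$ with $\Gamma/A$ a small group, so that $\mathbb C^2/\Gamma$ is obtained from $\mathbb C^2/A$ by a further quotient, and the minimal resolution $Y\to\mathbb C^2/\Gamma$ can be analyzed via the intermediate resolution $X\to\mathbb C^2/A$.

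The key steps, in order, are: (1) Classify $\Gamma$ and its possible normal cyclic subgroups; identify the finitely many "new" families (binary dihedral, tetrahedral, octahedral, icosahedral groups multiplied by central cyclic factors, and their index-two extensions) not covered by (a) or (b). (2) For each such $\Gamma$, describe the minimal resolution $Y$ of $\mathbb C^2/\Gamma$ explicitly in terms of its dual resolution graph, locating a distinguished compact divisor (or chain of divisors) whose complement is the resolution of a cyclic quotient singularity coming from the "end" of the graph. (3) On the non-compact cyclic end, install a Calderbank--Singer/LeBrun scalar-flat K\"ahler ALE metric with the correct group $\Gamma$ at infinity — this fixes the asymptotic model $(\mathbb R^4\setminus B)/\Gamma$. (4) On the compact core containing the distinguished divisor, produce a scalar-flat K\"ahler metric (using the existence of extremal/scalar-flat K\"ahler metrics on the relevant rational surfaces, via Calabi's theory and the Arezzo--Pacard gluing of scalar-flat K\"ahler metrics under blow-up). (5) Glue the core and the ALE end along a long neck in a fixed K\"ahler class using a cutoff of the K\"ahler potentials, obtaining a metric with small scalar curvature supported in the gluing region, and then solve the linearized scalar-flat K\"ahler equation (the fourth-order Lichnerowicz operator) to perturb to an exactly scalar-flat K\"ahler ALE metric; the weighted-analysis setup on ALE spaces (cf. the references on ALE analysis, e.g.\ \cite{Bart}) ensures the linearized operator is surjective after accounting for the finite-dimensional obstruction space of holomorphic vector fields.

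The main obstacle I expect is Step (5): the gluing is a parametrized singular-perturbation problem in which the obstruction space — coming from the group of holomorphic automorphisms of the rational surface $Y$ and from decaying harmonic $(1,1)$-forms on the ALE end — must be shown to vanish or to be absorbed by varying the K\"ahler class and the gluing parameters. Concretely, one must verify that the cokernel of the linearized operator, computed via a weighted Fredholm theory with weights chosen to match the ALE decay order $\tau$, is spanned by directions realized by genuine deformations of the complex structure or K\"ahler class of the minimal resolution; an index computation (Gauss--Bonnet/signature on the ALE space together with Riemann--Roch on $Y$) is what makes this bookkeeping tractable. A secondary difficulty is arranging, across all the sporadic families in Step (1), a \emph{uniform} description of the resolution graph so that the core surface in Step (4) always admits a scalar-flat K\"ahler metric — here one leans on the classification of which rational surfaces carry such metrics (LeBrun--Simanca, and the blow-up stability from Arezzo--Pacard--Singer), checking case by case that the relevant surfaces qualify. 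Once these pieces are in place, the K\"ahler condition is automatic from the construction (the metric is built from a potential on a resolution of a $\mathbb C^2/\Gamma$ singularity), and scalar-flatness is exactly what the perturbation achieves, completing the proof.
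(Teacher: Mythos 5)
This statement is not proved in the paper at all: Theorem~\ref{t:smorg} is quoted verbatim from Lock--Viaclovsky \cite[Theorem 1.3]{LV19} and used as a black box (the surrounding text only says that they ``piece together the known constructions'' through a ``delicate assembly''). So there is no internal proof to compare your proposal against; what you have written is an attempted reconstruction of the proof of a cited external result, and it has to be judged on its own terms.

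On those terms there is a concrete gap. In your Step (3) you propose to install a Calderbank--Singer or LeBrun scalar-flat K\"ahler ALE metric ``with the correct group $\Gamma$ at infinity.'' Those families only have \emph{cyclic} groups at infinity (their spheres at infinity are lens spaces), so for the genuinely new cases of the theorem --- $\Gamma$ non-cyclic and not contained in $SU(2)$, e.g.\ binary dihedral or polyhedral groups extended by central cyclic factors --- no member of those families can serve as the asymptotic model $(\mathbb{R}^4\setminus B)/\Gamma$. Producing an end with non-cyclic, non-$SU(2)$ group at infinity is precisely the hard point; the Lock--Viaclovsky construction gets it by realizing $\mathbb{C}^2/\Gamma$ as a further finite quotient of a simpler singularity, passing to quotients of already-constructed ALE spaces (so that the end is genuinely $(\mathbb{R}^4\setminus B)/\Gamma$), and then resolving the resulting \emph{isolated interior} orbifold points --- whose local groups are cyclic --- by gluing in Calderbank--Singer/LeBrun pieces and perturbing. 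Your decomposition into ``cyclic end plus compact core'' has the roles reversed, and as stated Step (3) cannot be carried out. In addition, Step (5), which you correctly identify as the main obstacle (surjectivity of the linearized scalar-curvature operator modulo the obstruction space in weighted spaces), is exactly where the substance of \cite{LV19} lies and is left entirely as a plan here, so even with Step (3) repaired the proposal is an outline rather than a proof.
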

To explain the relevance to Theorem B, {\emph{any}} finite subgroup of $SO(4)$ acting freely on $S^3$ is conjugate within $SO(4)$ to a subgroup of either the standard embedding $U(2)\subset SO(4)$ or its so-called orientation-reversed embedding, see \cite[Theorem 4.10]{Scott}. It follows that the diffeomorphism type of any $3$-dimensional spherical space form can be realized as the quotient of $S^3$ by a subgroup of $U(2)$. Using this observation with Theorem \ref{t:smorg}, one immediately obtains the following.
\begin{cor}\label{cor:ALE}
Let $N^3$ be an orientable $3$-dimensional spherical space form. Then there is a scalar-flat ALE manifold whose sphere at infinity is diffeomorphic to $N$.
\end{cor}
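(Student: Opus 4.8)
The plan is to deduce this directly from Theorem~\ref{t:smorg} together with the classification of finite subgroups of $SO(4)$ acting freely on $S^3$. First I would fix a round metric of constant curvature $1$ on $N$ and pass to the universal cover, realizing $N\cong S^3/\Gamma$ for some finite subgroup $\Gamma\subset SO(4)$ acting freely on $S^3$; this is possible precisely because $N$ is an orientable spherical space form. By \cite[Theorem~4.10]{Scott}, after conjugating $\Gamma$ within $SO(4)$ --- an operation which changes neither the freeness of the action nor the orientation-preserving diffeomorphism type of the quotient --- we may assume $\Gamma$ lies either in the standard copy $U(2)\subset SO(4)$ or in its orientation-reversed copy $\overline{U(2)}$. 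In the latter case, complex conjugation on $\mathbb{C}^2$ is an orientation-reversing element of $O(4)$ that conjugates $\overline{U(2)}$ onto $U(2)$, so $N$ is still diffeomorphic to $S^3/\Gamma'$ for some $\Gamma'\subset U(2)$ acting freely. Since the corollary only asserts a diffeomorphism (orientation being irrelevant), we may therefore assume from the start that $\Gamma\subset U(2)$.

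Next I would verify the hypothesis of Theorem~\ref{t:smorg}, namely that $\Gamma$ contains no complex reflections. Any complex reflection in $U(2)$ has $1$ as an eigenvalue, hence fixes the corresponding complex line pointwise, and that line meets $S^3$ in a circle of fixed points; since $\Gamma$ acts freely on $S^3$ it contains no such element. Theorem~\ref{t:smorg} then supplies a scalar-flat K{\"a}hler --- in particular scalar-flat --- ALE metric on the minimal resolution $\pi\colon X\to\mathbb{C}^2/\Gamma$.

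It remains to identify the sphere at infinity. Because $\pi$ is a biholomorphism away from the singular point, $X$ minus a suitable compact set is diffeomorphic to $(\mathbb{R}^4\setminus B)/\Gamma$, so $X$ is ALE with group at infinity $\Gamma$, and by the discussion preceding the statement its sphere at infinity is $S^3/\Gamma\cong N$, as desired. The only substantive ingredient is Scott's classification theorem, which I would cite rather than reprove; the remaining points --- the conjugation bookkeeping, the equivalence between acting freely and containing no complex reflections, and the identification of the end of a resolution --- are elementary, so I do not expect a genuine obstacle here.
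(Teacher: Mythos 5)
Your proposal follows exactly the paper's route: Scott's \cite[Theorem~4.10]{Scott} to conjugate $\Gamma$ into $U(2)$ (up to orientation), then Theorem~\ref{t:smorg} applied to the minimal resolution of $\mathbb{C}^2/\Gamma$. You in fact do more than the paper, which leaves implicit both the ``no complex reflections'' hypothesis and the orientation bookkeeping; your verification that a complex reflection would fix a circle in $S^3$ is correct and worth recording.

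One small but genuine slip in the orientation step: complex conjugation $(z_1,z_2)\mapsto(\bar z_1,\bar z_2)$ is $\mathrm{diag}(1,-1,1,-1)$ in real coordinates, which has determinant $+1$. It therefore lies in $SO(4)$ and \emph{normalizes} the standard $U(2)$ (sending $A$ to $\bar A$); it does not carry the orientation-reversed embedding onto the standard one. The repair is immediate: by definition the orientation-reversed embedding is $\rho\, U(2)\, \rho^{-1}$ for some fixed orientation-reversing $\rho\in O(4)$ (for instance $(z_1,z_2)\mapsto(\bar z_1,z_2)$, i.e.\ $\mathrm{diag}(1,-1,1,1)$), so conjugating by $\rho^{-1}$ moves $\Gamma$ into $U(2)$ and induces a (possibly orientation-reversing) diffeomorphism of the quotient spheres, which suffices since the corollary only asserts a diffeomorphism. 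With that one-line correction the argument is complete and matches the paper's.
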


We conclude this section by recalling the definition of the {\em{mass}} of an ALE manifold appearing in Corollary B.
An ALE $n$-manifold $(M,g)$ can be viewed as a time-symmetric slice of an $(n+1)$-dimensional space-time
and one may consider its ADM mass
\begin{equation}
m(M,g)=\lim_{R\to\infty}\frac{\Gamma(\frac{n}{2})}{4(n-1)\pi^{n/2}}\int_{S_R/\Gamma}\sum_{i,j=1}^n(g_{ij,i}-g_{ii,j})\nu^j d\vol_\delta
\end{equation}
where $S_R$ is the Euclidean coordinate sphere of radius $R$, the indicies arise from coordinates given by the chart at infinity, $\nu$
is the outward-pointing Euclidean normal vector to $S_R/\Gamma\subset\mathbb{R}^n/\Gamma$, and $\Gamma(\frac{n}{2})$ is a value of the Gamma function (unrelated to the group $\Gamma$).
If the order of decay $\tau$ satisfies $\tau>(n-2)/2$ and $R_g\in L^1(M)$, then $m(M,g)$ is independent of
the chart at infinity, due to \cite{Bart} and \cite{Chru}.

If $(M,g)$ is asymptotically flat, of dimension less than $8$, and has non-negative scalar curvature, then the 
celebrated positive mass theorem 
of Schoen-Yau \cite{SY79c} states that $m(M,g)\geq0$ and that the 
only such manifold with vanishing mass is Euclidean space. For the higher dimensional case,
see \cite{SY17} and \cite{Loh16}. 

\section{Proof of Theorem B}\label{s:theoremB}
This section is devoted to the proof of Theorem B. 
Before beginning,
we make some preparations.
The following is an elementary observation about large sets in
ALE manifolds of any dimension.
\begin{lemma}\label{l:ALEconversion}
Let $(X,g)$ be an $n$-dimensional ALE manifold with group at infinity $\Gamma\subset SO(n)$
and let $p\in X$. Given $\varepsilon>0$, there exists a bounded open set $\Omega\subset X$ such that
\begin{enumerate}
\item $\partial\Omega$ is diffeomorphic to $S^{n-1}/\Gamma$;
\item the restriction metric $g|_{\p\Omega}$ is $\varepsilon$-close to a metric of constant positive 
sectional curvature on $S^{n-1}/\Gamma$ in the $C^2$-topology;
\item $\p\Omega$ has positive mean curvature with respect to the normal vector pointing towards infinity.
\end{enumerate}
\end{lemma}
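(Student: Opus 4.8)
The plan is to extract $\Omega$ as the region bounded by a large coordinate sphere in the chart at infinity, and then verify the three properties by direct estimates using the ALE decay condition in Definition \ref{d:ALE}. Fix the base point $p$ and choose the compact set $K$ and chart $\Psi : X \setminus K \to (\mathbb{R}^n \setminus B)/\Gamma$ from the definition. For $R$ large, set $\Omega = \Omega_R := K \cup \Psi^{-1}\big((B_R \setminus B)/\Gamma\big)$, so that $\partial\Omega_R = \Psi^{-1}(S_R/\Gamma)$, which is diffeomorphic to $S^{n-1}/\Gamma$; this gives (1) for every large $R$. The normal vector on $\partial\Omega_R$ pointing toward infinity corresponds under $\Psi$ to the outward Euclidean radial direction $\partial_r$.

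For (2), pull the metric back via $\Psi^{-1}$ and write $(\Psi^{-1})^* g = \delta + e$, where $\delta$ is the Euclidean metric and $e = \mathcal{O}(r^{-\tau})$ with $\partial e = \mathcal{O}(r^{-\tau - 1})$ and $\partial^2 e = \mathcal{O}(r^{-\tau - 2})$. The restriction of $\delta$ to the Euclidean sphere $S_R$ of radius $R$, after rescaling by $R^{-2}$, is exactly the round metric of sectional curvature $1$ on $S^{n-1}$, descending to a constant-curvature metric on $S^{n-1}/\Gamma$. The rescaled restriction metric $R^{-2}\, (\Psi^{-1})^* g|_{S_R}$ differs from this by $R^{-2}$ times the restriction of $e$, which in terms of the rescaled coordinates has $C^2$-norm of order $R^{-\tau} \to 0$ as $R \to \infty$ (one must check that differentiating along the sphere and rescaling the metric to unit size keeps the error controlled — each tangential derivative in unit-size coordinates is $R$ times a derivative in the original coordinates, and this is compensated exactly by the decay of the higher derivatives of $e$). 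Hence, for a suitable scaling of the target metric, $g|_{\partial\Omega_R}$ is $\varepsilon$-close in $C^2$ to a constant positive sectional curvature metric once $R$ is large enough.

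For (3), I would compute the mean curvature of $\partial\Omega_R$ with respect to the outward normal. For the flat background, the Euclidean sphere $S_R \subset \mathbb{R}^n$ has mean curvature $(n-1)/R > 0$ (with the convention used in the paper, positive with respect to the outward normal). The mean curvature is a first-order quantity in the metric and its first derivatives, so the perturbation $e = \mathcal{O}(r^{-\tau})$, $\partial e = \mathcal{O}(r^{-\tau-1})$ changes it by $\mathcal{O}(R^{-\tau - 1})$. Since $\tau > 0$, we have $R^{-\tau - 1} = o(R^{-1})$, so $H_{g}(\partial\Omega_R) = (n-1)/R + \mathcal{O}(R^{-1-\tau}) > 0$ for $R$ large. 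Choosing $R$ large enough to satisfy both the $C^2$-closeness bound from (2) and the positivity from (3) simultaneously finishes the proof.

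The main obstacle is purely bookkeeping: making the $C^2$-closeness statement in (2) precise requires being careful about how the natural rescaling $R^{-2} g$ interacts with taking tangential derivatives along $S_R$, since a ``derivative of unit size'' on the rescaled sphere corresponds to an $R$-sized derivative in the ambient coordinates, and one needs the decay exponents of $\partial e$ and $\partial^2 e$ in Definition \ref{d:ALE} to exactly absorb these factors of $R$. No deep idea is needed — just a clean choice of the comparison metric (the round metric of radius $R$, or equivalently rescaling) and a verification that the chart-at-infinity asymptotics are strong enough.
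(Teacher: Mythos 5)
Your proposal is correct and follows essentially the same route as the paper: take $\Omega$ to be the union of $K$ with the region enclosed by a large coordinate sphere in the chart at infinity, and use the ALE decay to show the rescaled metric converges in $C^2$ to the flat cone metric, from which the $C^2$-closeness of the induced metric to a round metric and the positivity of the mean curvature both follow. The extra bookkeeping you supply for the interaction between rescaling and tangential derivatives is exactly the verification the paper leaves implicit.
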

\begin{proof}
Choose a compact set $K$ and diffeomorphism $\Phi$ as in Definition \ref{d:ALE}.
For $R>>0$ let $B_R$ denote the coordinate ball in $(\mathbb{R}\setminus B)/\Gamma$.
Considering the decay of $g$, $\frac{1}{R^2}(\Phi^{-1})^*g$ converges to 
the flat metric on the cone $\mathbb{R}^n/\Gamma$ as $R\to\infty$ in the $C^2$-topology.
It follows that, for sufficiently large $R$, $\Phi^{-1}(\partial B_R)$ has positive mean curvature and the
restriction of $g$ to $\Phi^{-1}(\partial B_R)$ is
$\varepsilon$-close to the metric of constant curvature $\frac{1}{R}$. The set $\Omega=K\cup\Phi^{-1}(B_R)$
is the desired open set.
\end{proof}

\noindent The following is a simple consequence of the connectedness result
Theorem \ref{t:connectedness}.
\begin{prop}\label{p:equiv}
Fix a closed oriented $3$-dimensional manifold $M^3$ which admits a psc metric. The following are equivalent
\begin{enumerate}
\item For any psc metric $g$ on $M$, $(M,g)$ is psc null-bordant.
\item There exists some psc metric $g_0$ on $M$ so that $(M,g_0)$ is psc null-bordant.
\end{enumerate}
\end{prop}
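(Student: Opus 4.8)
The plan is to prove the nontrivial implication, namely that (2) $\Rightarrow$ (1), since (1) $\Rightarrow$ (2) is immediate. So fix a closed oriented $3$-manifold $M$ admitting psc metrics, suppose $g_0$ is a psc metric on $M$ such that $(M,g_0)$ bounds a compact oriented $4$-manifold $W$ carrying a psc metric $\bar g_0$ which is a product $g_0 + dt^2$ near $\partial W = M$, and let $g_1$ be an arbitrary psc metric on $M$. We must produce a psc null-bordism for $(M,g_1)$.

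First I would reduce from the moduli space to the space $\mathrm{Riem}^+(M)$ itself: the diffeomorphism invariance of psc-bordism means that if $g_1$ is isotopic through psc metrics to $\varphi^* g_0$ for some $\varphi \in \mathrm{Diff}(M)$, then $(M, g_1)$ is psc null-bordant, because pulling back $\bar g_0$ by a diffeomorphism of $W$ extending $\varphi$ (for instance $\varphi$ times the identity in the collar, then any extension) gives a psc null-bordism of $(M, \varphi^* g_0)$, and a psc isotopy yields a psc product cobordism $M \times [0,1]$ — Theorem \ref{t:AB} is the clean tool here, since a psc path gives a psc metric on the cylinder with product ends, which we then glue. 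By Theorem \ref{t:connectedness}, $\mathcal{M}_+(M)$ is path connected, so $g_1$ and $g_0$ represent points joined by a path in the moduli space; lifting such a path (after subdividing and using that $\mathrm{Riem}^+(M) \to \mathcal{M}_+(M)$ is an open quotient map, so a path downstairs lifts locally to paths upstairs composed with diffeomorphisms) shows that $g_1$ is obtained from $g_0$ by finitely many steps, each of which is either a psc isotopy or an application of a diffeomorphism.

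Then I would assemble the null-bordism by concatenation: start with $(W, \bar g_0)$, which is a psc null-bordism of $(M, g_0)$ with a product collar; to its boundary glue on a finite sequence of psc product cobordisms $M \times [0,1]$ coming from the psc isotopies (these exist and have the required product form near both ends by Theorem \ref{t:AB}, applied to the path of metrics, or simply by a direct construction smoothing $g_s + dt^2$ to be product near the endpoints); the diffeomorphism steps do not change the total space at all, only the identification of the boundary. Gluing manifolds with product-collared psc metrics along their boundaries produces a smooth psc metric, again in product form near the remaining boundary component, and the resulting $W'$ is a compact oriented $4$-manifold with $\partial W' = M$ carrying a psc metric in the product form $g_1 + dt^2$ near the boundary. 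This is exactly a psc null-bordism for $(M, g_1)$, establishing (1).

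The main obstacle I anticipate is the lifting step: path-connectedness of the \emph{moduli} space does not formally give a path in $\mathrm{Riem}^+(M)$ between $g_0$ and $g_1$, only a path between their $\mathrm{Diff}(M)$-orbits, so I must be careful to cover the path by finitely many neighborhoods on which the quotient map admits continuous sections (using that $\mathrm{Diff}(M)$ acts properly and the quotient is locally modeled on a slice), thereby breaking the moduli-space path into alternating pieces of "genuine psc isotopy" and "act by a diffeomorphism." Once that bookkeeping is done, everything else is the standard gluing formalism for bordisms with product collars, which is routine. It is worth noting that this proposition is the reason Theorem B only needs to exhibit \emph{one} psc null-bordism for each relevant $3$-manifold rather than one for every psc metric on it.
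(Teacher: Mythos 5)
Your proposal is correct and follows essentially the same route as the paper: invoke Theorem \ref{t:connectedness} to obtain a path of psc metrics from $\Phi^*g_0$ to $g_1$ for a single diffeomorphism $\Phi$, turn that path into a psc product cobordism $(M\times[0,L],g(s)+ds^2)$ by slowing the parameterization, and glue it to $\partial W$ via $\Phi$. The only difference is that you spell out the slice-theorem lifting from $\mathcal{M}_+(M)$ to $\mathrm{Riem}^+(M)$, which the paper's proof takes for granted as the standard reading of Marques's theorem.
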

\begin{proof}
To prove the non-trivial implication $(2)\Rightarrow(1)$, let $g_0$ be a psc metric on $M$ 
and suppose there exists a compact $4$-dimensional psc manifold $(W^4,\bar g)$ so that $\partial W=M$
and $\bar g=g_0+dt^2$ near $M$. Let $g$ be a second psc metric on $M$.

By Theorem \ref{t:connectedness}, there is a path of psc metrics $\{g(t)\}_{t\in[0,1]}$ on $M$ 
so that $g(1)=g$ and $g(0)=\Phi^*g_0$ for some diffeomorphism $\Phi$ of $M$. By sufficiently slowing down the
parameterization of this path, one can obtain a psc-bordism of the form
\[
(M\times[0,L],g(s)+ds^2)
\]
between $(M,\Phi^*g_0)$ and $(M,g)$, see \cite{Rose}. Using $\Phi$, one may glue this 
cylinder to the boundary of $W$
to obtain
\begin{equation}
\left(W\bigcup_\Phi(M\times[0,L]), \bar g\cup(g'(s)+ds^2)\right),
\end{equation}
which is the promised psc null-cobordism of $(M,g)$.
\end{proof}

\vspace{.1in}

\noindent{\bf{Proof of Theorem B:}} First, we will show that $\Omega^{SO,+}_3$ is trivial. Let $M$ be a closed oriented $3$-manifold which admits a psc metric.
We first make some reductions.
By Proposition \ref{p:equiv}, it suffices to construct a psc null-cobordism of $(M,g_0)$
for a single psc metric $g_0$ on $M$. In order to achieve this, by Theorem \ref{t:AB}, it is
sufficient to find a Yamabe positive conformal manifold $(W,[\bar g])$ whose boundary $(M,[\bar{g}|_{M}])$ 
is also Yamabe positive.

We will first consider the case where $M$ is one of the irreducible manifolds appearing in Proposition \ref{p:classification}. 
For $M= S^2\times S^1$, consider the manifold $W=S^3_+\times S^1$
where $S^3_+$ denotes the upper hemisphere of the round $3$-sphere. With the product metric $\bar{g}$,
$S^3_+\times S^1$ is psc with minimal boundary isometric to the product of the round $S^2$ and $S^1$.
By inspecting the Rayleigh quotient (\ref{eq:Rayleigh}), $(S^3_+\times S^1,[\bar{g}])$ is 
Yamabe positive, competing our work in this case. 

Now consider the case of a spherical space form $M= S^3/\Gamma$.
By Corollary \ref{cor:ALE} there is a scalar-flat ALE manifold $(X,g_X)$ with sphere at infinity 
$S^3/\Gamma$. Applying Lemma \ref{l:ALEconversion} with some $\varepsilon>0$, 
we obtain a large compact set $K_\varepsilon\subset X$ with metric $\bar{g}_\varepsilon=g_X|_{K_\varepsilon}$
which is scalar-flat with mean convex boundary $S^3/\Gamma$. By perhaps choosing smaller
$\varepsilon$, we can ensure that the restriction metric $\bar{g}_\varepsilon|_{S^3/\Gamma}$
is sufficiently close to a round metric to have positive scalar curvature. Due to the positive mean curvature
of its boundary, $(K_\varepsilon,[\bar{g}_\varepsilon])$ is Yamabe positive, finishing our 
work with the space forms.

Finally, we consider general $M$. By Proposition \ref{p:classification}, $M$ may be decomposed as
$M=\#_{i=1}^k M_i$ where each $M_i$ is with $S^2\times S^1$ or a space form $S^3/\Gamma$.
By our work above, we may consider $(W_i,\bar{g}_i)$ which are psc null-cobordisms of some psc metrics
$g_i$ on $M_i$ for each $i=1,\dots,k$. Notice that the boundary connected sum
\[
W=\bignatural_{i=1}^k W_i
\]
is a null-cobordism of $M$. To make $W$ into a psc null-cobordism, we use a version of
the classical Gromov-Lawson surgery construction \cite{GL80b} adapted to the case of
boundary connected sums by Carr \cite[Lemma 10]{Carr}, c.f. \cite{Walsh}. 
This produces a psc metric on $W$ which is 
product near the boundary and resembles
a Gromov-Lawson connected sum construction 
applied to the $3$-manifolds 
$(M_i,\bar{g}_i|_{M_i})$ near the boundary of $W$. 
It follows that there is a psc null-bordant
psc metric on $M$. This shows that $\Omega^{SO,+}_3$ is
trivial. 

In Figure 2 we depict the null-cobordism
of a general psc $(M^3,g)$ we have obtained by combining
all the constructions in this section.

\begin{figure}[!htb]
\label{f:cob}
\includegraphics[scale=.3]{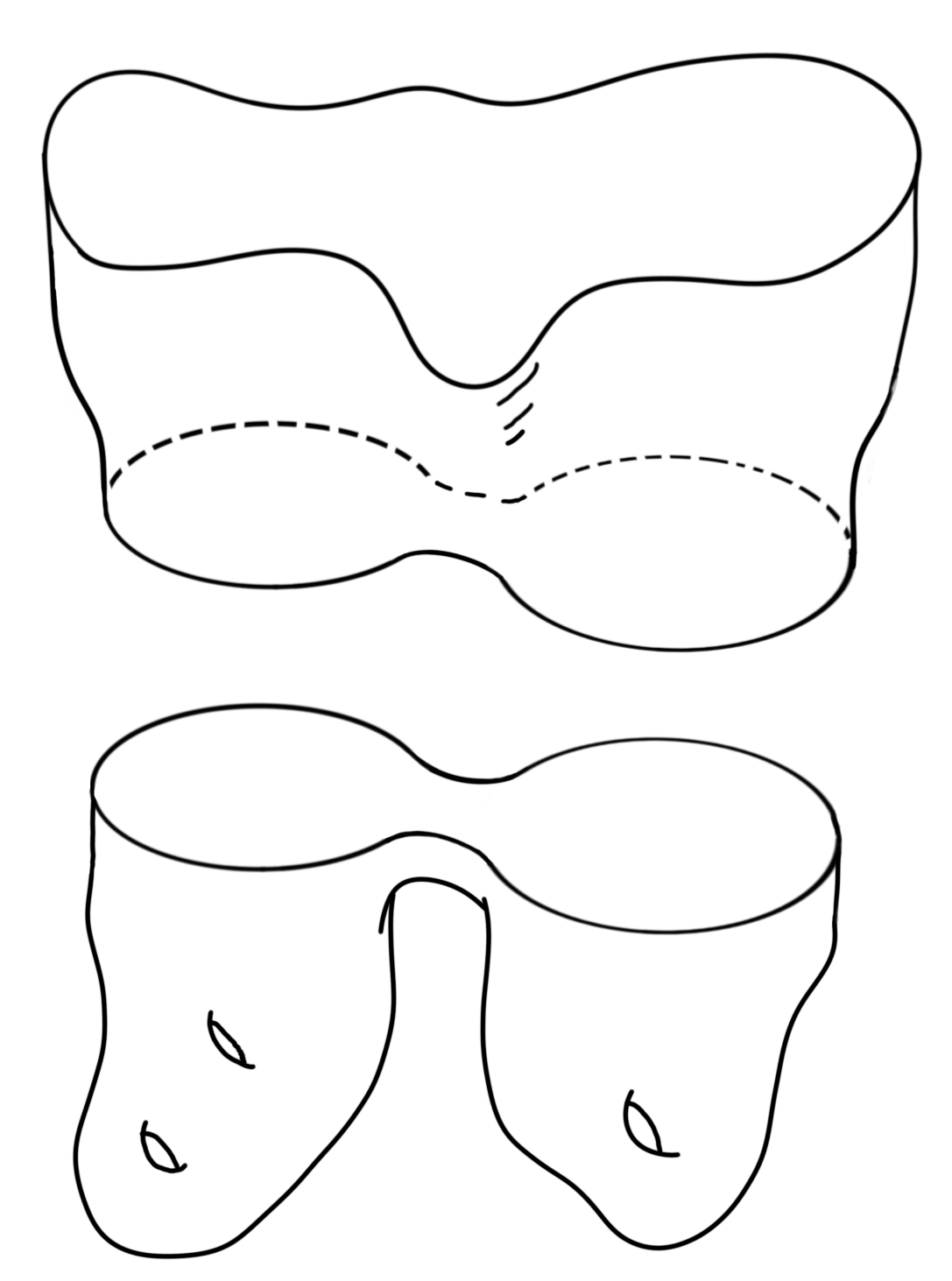}
\begin{picture}(0,0)
\put(30,290){{\Large $(M^3,g)$}}
\put(25,292){\vector(-1,0){30}}
{\thicklines
\put(-80,118){\vector(0,1){40}}
\put(-80,118){\vector(0,-1){0}}
\put(-185,128){\vector(0,1){44}}
\put(-185,128){\vector(0,-1){0}}
}
{\color{blue}
\put(-340,210){{ $(M\times[0,1],g_t+dt^2)$}}
\put(-290,223){\vector(4,1){70}}
\put(-330,193){[Proposition \ref{p:equiv}]}
}
{\color{red}
\put(-15,50){{\Large $\bignatural_i W_i$}}
\put(-10,65){\vector(-3,1){50}}
}
\end{picture}
\caption{The null-cobordism we construct of a psc $(M^3,g)$}
\end{figure}

Now we turn our attention to $\Omega^{SO,+}_3(S)$ where $S$ is some finite $1$-complex.
Let $(M,g)$ be a closed oriented psc $3$-manifold equipped with a map $F:M\to S$.
We will assume that $M$ and $S$ are connected -- if they are not, 
one may apply the proceeding 
argument to each component. By Proposition 
\ref{p:classification}, we have the decomposition
\[
M\cong(\#_{i=1}^{k_1}S^3/\Gamma_i)\#(\#^{k_2}(S^2\times S^1))
\]
where we allow $k_1$ and $k_2$ to possibly take the value of $0$.
Evidently, the bordism class $[(M,g,F)]$ is unchanged by homotopies of $F$.
As such, it suffices to consider the case where $S$ is the wedge of $l$ circles $\vee^lS^1$.

An exercise in basic algebraic topology shows there is a bijection
\[
[M,\vee^lS^1]\to \mathrm{Hom}(\pi_1(M),F_l)\cong\mathrm{Hom}(\Gamma_1*\cdots*\Gamma_{k_1}*F_{k_2},F_l)
\]
where $[M,\vee^lS^1]$ is the set of homotopy classes of maps from $M$ to $\vee^lS^1$, 
$F_l$ is the free group on $l$ generators, and $*$ is the free product.
Since all $\Gamma_i$ are finite and $F_l$ has no torsion elements, it follows that $F$ is homotopic 
to a map $F':M\to \vee^lS^1$ which maps the summand 
$(\#_{i=1}^{k_1}S^3/\Gamma_i)$ to a single point $x_0\in \vee^lS^1$.

Now consider the psc null-cobordism of $(M,g)$ we constructed above:
\[
(W,\bar{g})=\left(\left(\bignatural_{i=1}^{k_1} W_i\right)\bignatural\left(\bignatural^{k_2}S^3_+\times S^1\right),\bar{g}\right).
\]
We proceed by constructing an extension of $F'$ to a map $\overline{F}:W\to S^1$ in two steps.
First, declare $\overline{F}|_{\p W}=F'$ and that $\overline{F}$ map the summand $\natural_{i=1}^{k_1} W_i$ to 
the single point $x_0$. 
Since $\overline{F}$ is constant on the $\natural_{i=1}^{k_1} W_i$ summand, it suffices to extend $\overline{F}$ over
\[
W\setminus \left(\bignatural_{i=1}^{k_1} W_i\right)\cong \bignatural^{k_2}S^3_+\times S^1.
\]
Since the only $2$-cells in $\natural^{k_2}S^3_+\times S^1$ lie on its boundary where $\overline{F}$ is already defined,
all classes obstructing the extension of $\overline{F}$ vanish. For readers unfamiliar with obstruction theory, see \cite[Section 4.3]{Hatcher}. 
It follows that $\overline{F}$ may be extended over all 
of $W$ and so $(W,\bar{g},\overline{F})$
is our desired null-cobordism. This finishes the proof of Theorem B.

\section{Proof of Theorem A}\label{s:main}
Now that we have established the required bordism calculations, 
we are ready to prove Theorem A. 

\noindent {\bf{Proof of Theorem A:}} 
Let $M$, $S$, and $g$ be as in Theorem A. According to the condition $g\in L^\infty_E(M)\cap C^{\infty}(M\setminus S)$, there is a smooth metric $h$ and a positive constant $C$ so that $g$ is bounded above and below by $Ch$ and $h$, respectively.
Fix some positive function $\underline{R}$ on $M$ such that
\begin{equation}
0< \underline{R}\leq \min(1,R_g)
\end{equation}
which will act as a bounded surrogate for the possibly unbounded scalar curvature of $g$.
We may consider 
a positive Green's-type function, $G$, of the modified conformal Laplacian 
\begin{equation}\label{e:almostconf}
\underline{\mathcal{L}_g}=-\Delta_g+\frac83 \underline{R},
\end{equation}
see \cite[Theorem 6.1]{LSW}.
As a distribution, $G$ solves the equation
\begin{equation}
\left(-\Delta_g+\frac83 \underline{R}\right)G=\delta_S
\end{equation}
where $\delta_S$ is the Dirac measure of $S$. Strictly speaking, \cite{LSW} considers only Green's functions with singularities along
points. However, generalizing this construction to find
functions in the kernel of positive linear elliptic operators 
with singularities along smoothly 
embedded submanifolds of codimension greater than $2$ is classical,
see \cite{SY79}.

Due to the uniform ellipticity of $\underline{\mathcal{L}_g}$, the asymptotics of $G$ near $S$ behave in
a standard manner, see \cite[Theorem 7.1]{LSW} and \cite[Appendix]{SY79}. 
More precisely, there is a constant $C_0>0$ so that:
near an isolated point singularity $x_0\in S$,
\begin{equation}\label{e:greens1}
C_0^{-1}d_h(x_0,x)^{-2}\leq G(x)\leq C_0d_h(x_0,x)^{-2}
\end{equation}
and near a $1$-dimensional component $L\subset S$
\begin{equation}\label{e:greens2}
C_0^{-1}d_h(L,x)^{-1}\leq G(x)\leq C_0d_h(L,x)^{-1}.
\end{equation}
Next, we study the family of metrics
\begin{equation}
g_\varepsilon=(1+\varepsilon G)^2g
\end{equation}
on $M\setminus S$ where $\varepsilon$ is a small positive parameter. A straight forward calculation using (\ref{e:almostconf}) shows $\mathcal{L}_g(1+\varepsilon G)>0$. Inspecting the conformal transformation formula (\ref{eq:conformal}), it follows that $g_\varepsilon$ has positive scalar curvature.

Given a small distance $\eta>0$, consider the tubular neighborhood $U^\eta=\{x\in M\colon d_g(x,S)<\eta\}$ and fix $\eta_0>0$ small enough so that there is a map $f:U^{\eta_0}\to S$ 
deformation retracting $U^{\eta_0}$ onto $S$. Next, we consider the non-compact 
manifold $Q=\overline{U^{\eta_0}}\setminus S$ with the restriction metric, which we continue denoting by $g_\varepsilon$.

\begin{claim}\label{c:horizon}
There is a $\varepsilon_0>0$ so that, for each $\varepsilon\leq\varepsilon_0$, there is 
a closed smoothly embedded hypersurface $\Sigma_{\varepsilon}\subset U^{\eta_0}$ satisfying
\begin{enumerate}
\item $\Sigma_{\varepsilon}$ is stable-minimal with respect to $g_\varepsilon$;
\item $\overline{U^{\eta_0}}\setminus\Sigma_{\varepsilon}$ consists of two components:
$A_{\varepsilon}^{in}$ containing $S$ and $A_{\varepsilon}^{out}$ containing $\partial U^{\eta_0}$.
\end{enumerate}
\end{claim}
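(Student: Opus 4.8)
The plan is to produce the shielding hypersurface $\Sigma_\varepsilon$ as a solution to a free boundary (or, more precisely, an appropriately barriered) minimization problem for hypersurface area in $(\overline{U^{\eta_0}}, g_\varepsilon)$, exploiting the fact that the blown-up metric $g_\varepsilon$ develops a long metric neck near $S$ as $\varepsilon \to 0$. First I would recall, from the Green's function asymptotics \eqref{e:greens1}--\eqref{e:greens2}, that along any ray approaching a component of $S$ the conformal factor $(1+\varepsilon G)^2$ blows up like $d_h^{-4}$ near a point and $d_h^{-2}$ near a circle, so that $(M\setminus S, g_\varepsilon)$ is complete with an end modeled, after rescaling, on a scalar-flat (or asymptotically conical) ALE-type geometry — this is the same mechanism used by Li--Mantoulidis. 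The key quantitative point I would establish is that, for $\varepsilon$ small, the $g_\varepsilon$-distance from $\partial U^{\eta_0}$ to $S$ is large (it diverges as $\varepsilon \to 0$, or at least the neck is long and thin in a controlled way), while $\partial U^{\eta_0}$ remains a fixed mean-convex-type barrier in the outward direction and small geodesic spheres about $S$ in the $g_\varepsilon$-metric are mean-convex in the inward direction for the relevant sign — giving two barriers that trap a minimizer strictly between $S$ and $\partial U^{\eta_0}$.

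The main steps, in order, would be: (i) set up the isoperimetric/obstacle problem: minimize $g_\varepsilon$-area among hypersurfaces in $\overline{U^{\eta_0}}$ separating a fixed small neighborhood of $S$ from $\partial U^{\eta_0}$, in each homology/homotopy class dictated by the $S^{n-1}/\Gamma$ topology near a point of $S$ or the $S^2\times S^1$-type topology near a circle; (ii) use geometric measure theory (existence of area-minimizing integral currents, with interior regularity since $\dim M = 4 \le 7$, so the minimizer is a smooth embedded hypersurface) together with the barrier argument to conclude that a minimizer $\Sigma_\varepsilon$ exists, is smooth, closed, embedded, and does not touch either barrier — hence lies in the open neck region inside $U^{\eta_0}$; (iii) area-minimizing implies stable-minimal, giving (1); (iv) deduce the separation statement (2) from the fact that $\Sigma_\varepsilon$ was constructed in the separating class and is disjoint from $\partial U^{\eta_0}$ and from $S$, so $\overline{U^{\eta_0}} \setminus \Sigma_\varepsilon$ has exactly the two components $A_\varepsilon^{in} \ni S$ and $A_\varepsilon^{out} \supset \partial U^{\eta_0}$ (possibly after discarding spurious null-homologous components of the minimizer, or by minimizing among connected separating hypersurfaces).

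The hard part will be step (ii): ensuring the minimizer genuinely stays in the \emph{interior} of the neck and does not collapse onto $S$ or retract to $\partial U^{\eta_0}$. Near a point singularity this requires a foliation of the neck by mean-convex spheres $S^3/\Gamma$ whose mean curvature vectors point consistently, which follows from the near-conical structure of $g_\varepsilon$ after rescaling but must be checked uniformly in $\varepsilon$; near a circle component $L$, the cross-section is $S^2$ and one must rule out the minimizer "pinching off" and capping the circle — here the relevant separating class is a nontrivial $S^2$-bundle-type class, and one argues that any competitor in this class threading the neck has area bounded below by a positive constant (from the diameter lower bound on the cross-sectional spheres) while the barriers cost more, so the infimum is attained strictly inside. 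I would expect to invoke the maximum principle against the barrier foliations to finish, exactly as in the Schoen--Yau horizon construction and its adaptation in \cite{LM}; the appeal to \cite{LM}'s analogous lemma, with the codimension-$2$ versus codimension-$3$ bookkeeping adjusted for the $1$-complex $S$, is what makes this routine enough to be deferred in part to the Appendix as the paper indicates.
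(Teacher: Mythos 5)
Your overall skeleton matches the paper's: minimize the $g_\varepsilon$-area of boundaries of open sets separating $\partial U^{\eta_0}$ from $S$, invoke geometric measure theory for existence and smoothness (dimension $4\le 7$), note that minimizing implies stable, and observe that the whole difficulty is confining the minimizer strictly inside the neck. However, both confinement mechanisms you propose run into genuine trouble, and the paper uses different ones.

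First, your inner barrier. You want a foliation of the end near $S$ by mean-convex spheres coming from the ``near-conical structure of $g_\varepsilon$ after rescaling.'' But $g$ is only $L^\infty_E$ near $S$ (smooth only on $M\setminus S$), and the Green's function asymptotics (\ref{e:greens1})--(\ref{e:greens2}) are two-sided \emph{bounds}, not $C^2$ expansions; so $g_\varepsilon$ near $S$ is merely bi-Lipschitz to a cone metric, with no control on its derivatives. Mean curvature of candidate barrier leaves is therefore uncontrolled, and no maximum-principle argument is available. This is exactly why the paper's Appendix Lemma \ref{l:minsol} assumes only the two-sided bound (\ref{e:uniformity}) and replaces the barrier by a purely isoperimetric argument: if the minimizing sequence escaped toward $S$ (to infinity in the coordinate $r=\eta_0^2/t^2$), the coarea inequality forces the differential inequality $H'/H^{\frac{n-2}{n-1}}\le -C$, whose integration shows the cross-sections must be empty beyond a finite radius. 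Second, your outer barrier: there is no reason for $\partial U^{\eta_0}$, the boundary of an $h$-tubular neighborhood, to be mean-convex for $g_\varepsilon$ (which converges to the arbitrary smooth metric $g$ there). The paper instead allows the minimizer to a priori touch $\partial U^{\eta_0}$ and rules this out quantitatively: the competitor $\partial U^{\sqrt{\varepsilon}}$ shows $V_\varepsilon\to 0$ as $\varepsilon\to 0$ (equation (\ref{e:volcomp})), while the monotonicity formula gives a uniform positive lower bound on the area of any minimal surface meeting the fixed outer annulus $U^{\eta_0}\setminus U^{\eta_1}$, where the metrics $g_\varepsilon$ have uniformly bounded geometry. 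This comparison is also where the smallness of $\varepsilon$ actually enters the claim; your barrier scheme does not explain it. So the proposal is not complete as written: you would need to replace both maximum-principle steps with arguments that survive the $L^\infty$ regularity near $S$ and the absence of convexity at $\partial U^{\eta_0}$.
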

\begin{proof}
For each $\varepsilon>0$, consider the minimization problem
\begin{equation}\label{e:min}
V_{\varepsilon}=\inf\{\vol_{g_\varepsilon}(\p\Omega)\colon \Omega\subset Q\text{ is open and contains }
	\p U^{\eta_0}\}.
\end{equation}
The first step is to leverage Lemma \ref{l:minsol} in the Appendix to study a solution of this minimization problem. In order to explain how Lemma \ref{l:minsol} applies, we must take a moment to articulate \eqref{e:min} in a more amenable form. 

By repeating the argument for each path component of $S$, it suffices to consider the case where $S$ is connected. The case where $S$ is a point is treated in \cite{LM}, so we will assume $S$ is an embedded circle. Choosing a potentially smaller $\eta_0$, one may consider Fermi coordinates $(t,y)$ on $U^{\eta_0}$ where $t$ parameterizes $S$ and $y$ ranges over the fibers of $S$'s normal bundle. Introduce polar coordinates $(\rho,\theta,\varphi)$ in the $y$ variables and consider a new radial coordinate $r=-\log\rho\in(-\log(\eta_0),\infty)$. Using the asymptotics (\ref{e:greens1}) and (\ref{e:greens2}), there is a constant $C_1$ so that $g_\varepsilon$ satisfies $C_1^{-1}\widehat{h}_\varepsilon\leq g_\varepsilon\leq C_1 \widehat{h}_\varepsilon$ over $U^{\eta_0}$ where $\widehat{h}_\varepsilon$ is given by 
\begin{align*}
\widehat{h}_\varepsilon&:=(1+\varepsilon /\rho)^2(dt^2+d\rho^2+\rho^2g_{S^2})\\
{}&=(1+\varepsilon e^{r})^2(dt^2+e^{-2r}dr^2+e^{-2r}g_{S^2})\\
{}&=(e^{-r}+\varepsilon)^2(dr^2+e^{2r}dt^2+g_{S^2}).
\end{align*}
At this point we have compared the metrics $g_\varepsilon$ and $\widehat{h}_\varepsilon$ on the domain $Q\cong [-\log\eta_0,\infty)\times S\times S^2\cong \left(\mathbb{R}^2\setminus B^2_{\eta_0}(0)\right)\times S^2$. Smoothly extend the restriction $g_\varepsilon|_{U^{\eta_0}}$ over $\mathbb{R}^2\times S^2$ to a metric $g'_\varepsilon$. For each $\varepsilon$, there is a constant $C_\varepsilon$ so that 
\begin{equation}\label{e:gepsilonbound}
C_\varepsilon^{-1}(g_{\mathbb{H}^2}+g_{S^2})\leq g_\varepsilon'\leq C_\varepsilon(g_{\mathbb{H}^2}+g_{S^2})
\end{equation}
on $\mathbb{R}^2\times S^2$ where $g_{\mathbb{H}^2}$ denotes the metric on the hyperbolic plane with sectional curvature $-1$. To be clear, the constant $C_\varepsilon$ depends heavily on $\varepsilon$ and the choice of extension $g'_\varepsilon$.

Now according to Lemma \ref{l:minsol}, for all
sufficiently small $\varepsilon>0$ the 
infemum $V_{\varepsilon}$
is realized by a bounded set $\Omega_{\varepsilon}\subset \mathbb{R}^2\times S^2$ containing $B_{1}(0)\times S^2$ whose boundary yields an embedded hypersurface $\Sigma_\varepsilon$. 
According to the classical regularity theory \cite{Fed} in this dimension, 
$\Sigma_{\varepsilon}$ is smooth away 
from the intersection $\Sigma_{\varepsilon}\cap\p U^{\eta_0}$. The rest of the proof is devoted to showing that the 
intersection $\Sigma_{\varepsilon}\cap\partial U^{\eta_0}$ is empty
for all sufficiently small $\varepsilon$. 

First observe that
\begin{equation}\label{e:volcomp}
\lim_{\varepsilon\to0}V_{\varepsilon}=0.
\end{equation}
Indeed, for small distances $\eta$, one may consider the boundary of the neighborhood $U^\eta$ and compute
\begin{equation}\label{e:volcompest}
\vol_{g_\varepsilon}(\partial U^\eta)\leq C_1\eta^2(1+\varepsilon\eta^{-1})^4
\end{equation}
for some constant $C_1>0$ depending only on $g$ and the background metric $h$.
In particular, $\vol_{g_\varepsilon}(\partial U^{\sqrt{\varepsilon}})\to 0$ as $\varepsilon\to0$
and (\ref{e:volcomp}) follows.

Now let $\eta_1\in(0,\eta_0)$. We claim that there exists $\varepsilon_0>0$ so that
\begin{equation}
\Sigma_{\varepsilon}\cap(U^{\eta_0}\setminus U^{\eta_1})=\emptyset
\end{equation}
for all $\varepsilon\leq\varepsilon_0$.
For sake of contradiction, assume there is a sequence $\varepsilon_j\to0$ so that 
$\Sigma_{\varepsilon_j}\cap(U^{\eta_0}\setminus U^{\eta_1})\neq\emptyset$ for all $j=1,2,\dots$.
For each $j$, choose a point
\[
x_j\in\Sigma_{\varepsilon_j}\cap(U^{(\eta_0-\eta_1)/2}\setminus U^{\eta_1}).
\] 
Let $B^h_\eta(x)$ denote the metric ball about $x$ of radius $\eta$ using the metric $h$.
Since $g_\varepsilon\to g$ smoothly on the region $U^{\eta_0}\setminus U^{\eta_1}$ where $g$ is bounded in $C^k$,
$B^h_{(\eta_0-\eta_1)/2}(x_j)\cap\Sigma_{\varepsilon}$ are smooth minimal 
hypersurfaces with respect to metrics whose coefficients and their derivatives are uniformly bounded in, say, normal coordinates. In this context, the monotonicity theorem for minimal submanifolds provides a lower bound
\begin{equation}
\vol_{g_\varepsilon}(B^h_{(\eta_0-\eta_1)/2}(x_j)\cap\Sigma_{\varepsilon})\geq V_0
\end{equation} 
where $V_0$ is a positive constant independent of sufficiently small $\varepsilon$, contradicting (\ref{e:volcomp}). This finishes the proof of Claim \ref{c:horizon}.
\end{proof}

During the remainder of the proof of Theorem A we fix some $\varepsilon$ small enough to apply Claim \ref{c:horizon}.
The parameter $\varepsilon$ will no longer play a significant role and we only mention it where necessary.
By Claim \ref{c:horizon}, we obtain a stable-minimal hypersurface $\Sigma$ which separates $M$
into two components: $A^{in}$ containing $S$ and $A^{out}$ containing $\p U^{\eta_0}$. We
mention that $\Sigma$ will generally be disconnected, having as many components as $S$. 
Let $h'=g_\varepsilon|_{\Sigma}$ denote the restriction metric.
More notation: let $M_0$ denote the compact manifold $M\setminus A^{in}$.
Abusing notation, we continue to denote the restriction of $g_\varepsilon|_{M_0}$ by $g_\varepsilon$.

Notice that $(M_0,g_\varepsilon)$ is a psc manifold whose boundary, $\Sigma$, has vanishing mean curvature.
It follows that $(M_0,g_\varepsilon)$ is Yamabe positive.
Meanwhile, since $\Sigma$ is a closed stable-minimal hypersurface with trivial normal bundle in a 
psc manifold, the classical observation of Schoen-Yau
\cite[Theorem 1]{SY79} implies that $(\Sigma,h')$ is Yamabe positive.
This allows us to find a psc metric $\tilde{h}\in[h']$.
By Theorem \ref{t:AB}, we may find 
a new psc metric $\tilde{g}_0$ on $M_0$ which has the product structure
\begin{equation}
\tilde{g}_0=\tilde{h}+dt^2
\end{equation}
near $\Sigma$. 
Now we are prepared to begin attaching psc 
null-cobordisms to the boundary of $(M_0,\tilde{g}_0)$.

Consider the closed psc manifold $(\Sigma,\tilde{h})$ and the restriction $f|_\Sigma:\Sigma\to S$
where $f$ is the retraction $f:U^{\eta_0}\to S$. By Theorem B, 
we can find a psc null-cobordism $(W,g_W)$ of $(\Sigma,\tilde{h})$ and 
an extension $\overline{f}:W\to S$. We will require $\overline{f}$ later. For now, we can form 
\[ 
\overline{M}=M_0\bigcup_{\Sigma} W,\quad\quad \bar{g}=\begin{cases}
\tilde{g}_0&\text{ on }M_0\\
g_{W}&\text{ on }W.
\end{cases}
\]
Notice that $\overline{M}$ is a smooth closed oriented manifold and $\overline{g}$ is a smooth psc metric.
\begin{claim}\label{c:en}
There is a degree-$1$ map $F:\overline{M}\to M$.
\end{claim}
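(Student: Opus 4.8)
The plan is to build $F:\overline{M}\to M$ piece by piece, using the structure of $\overline{M}=M_0\cup_\Sigma W$. On the $M_0$ part, we essentially already have a map: recall that $M_0=M\setminus A^{in}$ sits inside $M$, so we can try to use the inclusion $M_0\hookrightarrow M$. The issue is that this is not onto — it misses the open region $A^{in}$ containing $S$ — so we must arrange the map on $W$ to ``sweep'' over $A^{in}$ and then a bit further. First I would recall that $A^{in}\subset U^{\eta_0}$ and that $f:U^{\eta_0}\to S$ is a deformation retraction; in particular $\overline{A^{in}}$ deformation retracts onto $S$, and $\Sigma=\partial A^{in}$ lies in $U^{\eta_0}$. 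The key point is that the composite $W\xrightarrow{\overline f} S\hookrightarrow M$ (where we view $S\subset M$) agrees on $\partial W=\Sigma$ with $f|_\Sigma:\Sigma\to S\subset M$, which is homotopic — through maps into $\overline{A^{in}}$ — to the inclusion $\Sigma\hookrightarrow M$.

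The construction of $F$ then goes as follows. Choose a collar $\Sigma\times[0,1]\subset M_0$ of the boundary $\Sigma=\Sigma\times\{0\}$. Define $F$ on $M_0\setminus(\Sigma\times[0,1))$ to be the inclusion into $M$. On the collar $\Sigma\times[0,1]$, use the homotopy (living in $\overline{A^{in}}$, hence in $U^{\eta_0}$) from the inclusion $\Sigma\hookrightarrow M$ (at $t=1$) to the retraction $f|_\Sigma:\Sigma\to S$ (at $t=0$). Finally, on $W$ itself, set $F|_W=\iota\circ\overline f$, where $\iota:S\hookrightarrow M$ is the inclusion of the embedded $1$-complex; this matches the collar definition along $\Sigma\times\{0\}=\partial W$. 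This produces a well-defined continuous map $F:\overline{M}\to M$ which can be smoothed, and by construction $F$ maps $F^{-1}(U)$ — where $U$ is a slightly shrunk tubular neighborhood of $S$ with $\overline{A^{in}}\subset U\subset U^{\eta_0}$ — into $U$, while being the identity (conformal diffeomorphism) outside.

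It remains to compute the degree. Pick a regular value $p\in M\setminus U$; since $F$ sends everything in $F^{-1}(U)$ back into $U$, the only preimages of $p$ lie in the region of $M_0$ where $F$ is the inclusion, which contributes exactly one point with sign $+1$. Hence $\deg F=1$. The one subtlety to treat carefully — and the step I expect to be the main obstacle — is verifying that $f|_\Sigma:\Sigma\hookrightarrow M$ really is homotopic to the inclusion $\Sigma\hookrightarrow M$ \emph{within a neighborhood of $S$}: this is where the deformation retraction $f:U^{\eta_0}\to S$ is used, since $\Sigma\subset U^{\eta_0}$ and the straight-line homotopy of the retraction stays inside $U^{\eta_0}$. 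This homotopy is what glues the ``inclusion'' description of $F$ on $M_0$ to the ``$\overline f$'' description of $F$ on $W$, and it is precisely why Theorem B was needed with the $1$-complex $S$ as target rather than just $\Omega^{SO,+}_3$: without the extension $\overline f$ compatible with $f|_\Sigma$, the gluing — and hence the degree-$1$ map — could fail when components of $S$ carry $\pi_1$.
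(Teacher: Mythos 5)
Your construction is correct and is essentially the paper's own argument: both define $F$ to be the identity/inclusion away from the singular region, collapse $W$ onto $S$ via the extension $\overline{f}$ provided by Theorem B, interpolate in an intermediate region using the retraction $f$ of the tubular neighborhood, and compute the degree at a regular value outside that neighborhood. The only (cosmetic) difference is where the interpolation lives — you use a collar of $\Sigma$ inside $M_0$, while the paper maps all of $A^{out}$ to $S$ and interpolates on an outer annulus $U^{\eta_2}\setminus U^{\eta_0}$ — and your homotopy need only stay in $U^{\eta_0}$ (which the tubular-neighborhood retraction guarantees), not in $\overline{A^{in}}$ as you state.
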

\begin{proof}
Though this map is more easily understood with a visual aid, 
see Figure \ref{f:theoremA}, we describe it in some detail.
Consider the region $W'=A^{out}\cup W\subset \overline{M}.$
Also consider the annular region $U^{\eta_2}\setminus U^{\eta_0}$ surrounding $W_i'$, where $\eta_2>\eta_0$
has been chosen so that the retraction $f:U^{\eta_0}\to S$
extends to a retraction $U^{\eta_2}\to S$. It follows that we can find a map
\[
p:\overline{U^{\eta_2}\setminus U^{\eta_0}}\to \overline{U^{\eta_2}}
\]
which equals the identity on $\p U^{\eta_2}$ and equals this retraction on $\p U^{\eta_0}$.
Now consider the continuous map
\begin{equation}
F:\overline{M}\to M,\quad\quad P=\begin{cases}
\mathrm{Id}&\text{ on } \overline{M}\setminus \left(W'\cup(\overline{U^{\eta_2}\setminus U^{\eta_0}})\right)\\
p&\text{ on }\overline{U^{\eta_2}\setminus U^{\eta_0}}\\
f&\text{ on } A^{out}\\
\overline{f}&\text{ on }W.
\end{cases}
\end{equation}
Since $F$ equals the identity far away from $W$, $F$ is degree $1$.
\end{proof}
We observe that, inspecting the proof of Theorem \ref{t:AB} found in \cite{AB}, the above metric $\tilde{g}_0$
can be chosen to be conformal to $g_\varepsilon$ away from a neighborhood 
containing $\Sigma$ of any desired size. 
From this and our construction in Claim \ref{c:en}, it follows that one can arrange for $F$ to be a conformal 
diffeomorphism away from $F^{-1}(U^{\eta_2})$. This finishes the proof of Theorem A.

\section{Proof of Corollaries A and B}\label{s:cors}
We are now ready to prove Corollaries A and B.

\noindent{\bf{Proof of Corollary A}:}
Let $M$ and $S$ be as in Corollary A.
For sake of contradiction, suppose there exists $g$, an $L_E^\infty(M)\cap C^\infty(M\setminus S)$ Riemannian metric 
with $R_g\geq0$ and $Ric_g\not\equiv0$ on $M\setminus S$. 
By making use of the analysis contained in \cite{LM}, it suffices to obtain a contradiction 
under the extra assumption that $R_g>0$ on $M\setminus S$. Let us briefly explain this reduction.
Since $Ric_g\not\equiv0$, one can make a small non-conformal perturbation to obtain 
a metric $g'$ with the same regularity
as $g$ such that $R_{g'}\geq0$ and $R_{g'}(p)>0$ for some point $p\in M\setminus S$, see the proof 
of \cite[Theorem 1.7]{LM}. 
Then one can find a conformal
transformation of $g'$ to a metric $g''$ of the same regularity which has positive scalar 
curvature on $M\setminus S$, see \cite[Corollary 4.2]{LM}.
Therefore, we will assume $R_g>0$ on $M\setminus S$.

By applying Theorem A, we obtain a smooth oriented psc manifold $(\overline{M},\overline{g})$
with a degree-$1$ map to $M$. According to Proposition \ref{p:en}, $\overline{M}$ is enlargeable.
However, the existence of the psc metric $\overline{g}$ contradicts Theorem \ref{t:en}, completing
the proof of Corollary A.

\vspace{.1in}

\noindent{\bf{Proof of Corollary B}}: Let $(M,g)$ and $S$ be as in Corollary B. Notice that, as a smooth manifold,
we may regard $M$ as a closed oriented manifold $M'$ with a point removed i.e. $M\cong M'\setminus\{x_0\}$.

For sake of contradiction,
suppose that $m(M,g)<0$. We will obtain a contradiction to Theorem A by adapting
an argument first established by Lohkamp in \cite{Loh} to our low-regularity situation
using some tools from \cite{LM}. We proceed in several steps.
Our argument is very similar to the proof of \cite[Proposition 6.1]{Loh}, but we present it in order
to point out the modifications necessary in our setting.
Fix a compact set $K\subset M$ which contains the singular set $S$
and so that $M\setminus K$ is diffeomorphic to $\mathbb{R}^4\setminus \overline{B_1^4}$
with the metric decay as in Definition \ref{d:ALE}. Let $r$ denote the radial coordinate on 
$\mathbb{R}^4\setminus \overline{B_1^4}$.

The first step is to conformally change $g$ to a scalar-flat metric with negative mass. 
To begin, we may find a $W^{1,2}_{\mathrm{loc}}(M)$ solution $u>0$ satisfying
\[
\begin{cases}
\mathcal{L}_gu=0\\
\lim_{r\to\infty}u=1,
\end{cases}
\]
see the proof of \cite[Theorem 1.9]{LM}.
It follows that $0<u<1$ and $u\in C^{0,\alpha}(M)\cap C^\infty(M\setminus S)$ by the maximum principle
and standard elliptic estimates. According to \cite{Bart}, $u$ has the asymptotic expansion
\[
u(x)=1+Ar^{-2}+\mathcal{O}(r^{-3})
\]
where $A\leq0$ is a constant.
Assembling the above facts, the conformal metric $g_1=u^{2}g$ has the same regularity as $g$, is scalar-flat on $M\setminus S$,
is still asymptotically flat, and has mass $m(M,g_1)=m(M,g)+A<0$.

Next, we perturb $g_1$ to make the end conformally Euclidean while retaining the negativity
of the mass. Consider a function $\psi:M\to\mathbb{R}$
such that $\psi\equiv1$ on $K$, $\psi\equiv 0$ for $r$ sufficiently large in the asymptotic region,
and $0\leq\psi\leq1$. Set $g_\psi=\psi g_1+(1-\phi)\delta$ where $\delta$ is the flat Euclidean metric on
$M\setminus K$. By \cite[Proposition 4.1]{Schoen}, $\psi$ can be chosen so that, upon finding a solution $u'>0$ to
\[
\begin{cases}
\underline{\mathcal{L}_{g_\psi}}u'=0\\
\lim_{r\to\infty}u'=1,
\end{cases}
\]
the metric $g_2=(u')^2g_\psi$ is scalar-flat, asymptotically Euclidean, and still has negative mass. The metric
$g_2$ has the same regularity as $g$ for the same reason $g_1$ had this regularity.
Notice that $g_2$ is conformally flat on $\{r\geq R_0\}\subset (M\setminus K)$ for sufficiently large $R_0$.
It follows that $g_2=\phi^2\delta$ on $\{r\geq R_0\}$ for some function $\phi:\mathbb{R}^4\setminus B_{R_0}^4\to\mathbb{R}$.

Now we will perturb $g_2$ to make it flat at infinity while at the same time making scalar curvature positive somewhere.
Since $g_2$ is scalar-flat, $\phi$ is harmonic with respect to $\delta$ and tends to $1$ at infinity.
It follows that we may write
\[
\phi=1+\frac{m(M,g_2)}{12 r^2}+f
\]
where $f$ is a function satisfying $|f|\leq Cr^{-3}$ for some constant $C$.
By \cite[Lemma 6.2]{Loh}, in this setting one can explicitly construct a conformal factor
$\phi'>0$ so that
\[
\begin{cases}
\phi'\equiv\phi &\text{ on }\{R_1>0\}^C\\
\phi'\equiv\text{const.}&\text{ on }\{R_2>0\}\\
\Delta_\delta\phi'\leq0&\text{ on }M\\
\Delta_\delta\phi(x)<0&\text{ for some point }x\in M,
\end{cases}
\]
where $R_1<R_2$ are two radii larger than $R_0$. 

Now, the metric $g_3=(\phi')^2g_\psi$ is flat on $\{R_2>0\}$, scalar non-negative everywhere, and not scalar-flat. The metric $g_3$
has the same regularity as $g_2$ since we have not altered it within $K$.
By appropriately identifying opposing faces of the cubical complex $\p([-R_2,R_2]^{\times 4})$
in $M\setminus K$, $g_3$ descends to a metric on $T^4\# M'$. Since the map $T^4\# M'\to T^4$ 
collapsing the $M'$ factor to a point has degree $1$, Proposition \ref{p:en} implies that 
$T^4\# M'$ is enlargeable. Since $g_3$ is not scalar-flat, this
contradicts Corollary A, completing the proof of Corollary B.

\appendix
\section{ }
In this section we will provide some details on the minimization problem found in Claim \ref{c:horizon} during the proof of Theorem A. The argument we present here is only a slight modification of the proof of \cite[Lemma 6.1]{LM}.

Some notation: if $(M,g)$ is a Riemannian manifold and $k$ is some whole number, 
we write $\mathcal{H}_g^{k}$ for the $k$-dimensional Hausdorff measure associated with $g$.

\begin{lemma}\label{l:minsol}
Suppose $g'$ is a smooth metric on $\mathbb{R}^2\times S^2$ which satisfies
\begin{equation}\label{e:uniformity}
\lambda^{-1}(g_{\mathbb{H}^2}+g_{S^2})\leq g'\leq \lambda(g_{\mathbb{H}^2}+g_{S^2})
\end{equation}
for a positive constant $\lambda$. Then there exists a radius $R$ so that the minimization problem
\begin{equation}\label{e:minprob}
V=\inf\{\mathcal{H}^{3}_{g'}(\p \Omega)\colon \Omega\subset\mathbb{R}^2\times S^2\text{ open, containing }B_1(0)\times S^2\}
\end{equation}
is solved by an open set lying within $B_R(0)\times S^2\subset\mathbb{R}^2\times S^2$.
\end{lemma}

\begin{proof}
A solution to problem \eqref{e:minprob} is known as an {\emph{outer minimizing hull}} of the region $B_1(0)\times S^2$. According to a result of Fogagnolo-Mazzieri \cite[Theorem 1.1]{FM}, such an outward minimizing hull exists and is given by a bounded subset of $\mathbb{R}^2\times S^2$ so long as $(\mathbb{R}^2\times S^2,g')$ satisfies the following Euclidean isoperimetric inequality: there is a constant $C>0$ so that
\begin{equation}\label{e:Eiso}
\mathcal{H}^4_{g'}(\Omega)\leq C\mathcal{H}^{3}_{g'}(\partial\Omega)^{\frac{4}{3}}
\end{equation}
for all bounded regions $\Omega\subset \mathbb{R}^2\times S^2$ with smooth boundary. As such, we aim to establish \eqref{e:Eiso}. 

Leveraging the uniformity assumption \eqref{e:uniformity}, it suffices to establish an Euclidean isoperimentric inequality for the product manifold $M_*:=(\mathbb{R}^2\times S^2,g_{\mathbb{H}^2}+g_{S^2})$. To this end, suppose we are given an open set $\Omega\subset M_*$ with smooth boundary. When $\Omega$ has sufficiently small volume, the inequality \eqref{e:Eiso} follows from a classical argument using the fact that $M_*$ satisfies a Ricci curvature lower bound $\mathrm{Ric}^{M_*}\geq -g$ and a noncollapsing condition $\mathrm{Vol}_{M_*}(B_1(p))\geq1$. Namely, the result \cite[Lemma 3.2]{Heb} implies there is an $\eta>0$ and constant $C$ so that any $\Omega\subset M_*$ with $\mathcal{H}^4(\Omega)\leq \eta$ satisfies inequality \eqref{e:Eiso}. Now assume $\mathcal{H}^4(\Omega)\geq\eta$. The result \cite[Theorem 6.19]{Gromovbook} states that such $\Omega$ satisfy the same isoperimetric inequality as the one satisfied by the fundamental group of any manifold covered by $M_*$. Since $M_*$ covers manifolds with hyperbolic fundamental group, we conclude the existence of a $C'>0$ such that the linear inequality $\mathcal{H}^4(\Omega)\leq C'\mathcal{H}^3(\partial\Omega)$ holds, from which the Euclidean inequality \eqref{e:Eiso} quickly follows.

\end{proof}

\section*{Conflict of interest statement}
The author states that there is no conflict of interest.

\bibliography{3dPSC}

\bibliographystyle{alpha}

\end{document}